\theoremstyle{plain}
\newtheorem{thm}{Theorem}[subsection]
\numberwithin{thm}{section}
\newtheorem{lem}[thm]{Lemma}
\newtheorem{prop}[thm]{Proposition}
\newtheorem{cor}[thm]{Corollary}
\theoremstyle{definition}
\newtheorem{defn}[thm]{Definition}
\theoremstyle{remark}
\newtheorem{rmk}[thm]{Remark}
\newtheorem{eg}[thm]{Example}
\renewcommand{\email}[2][]{%
  \ifx\emails\@empty\relax\else{\g@addto@macro\emails{,\space}}\fi%
  \@ifnotempty{#1}{\g@addto@macro\emails{\textrm{(#1)}\space}}%
  \g@addto@macro\emails{#2}%
}
\begin{document}

\title{Finite Extensions of $\mathbb{Z}_\mathrm{max}$}
\author{Jeffrey Tolliver}
\email{tolliver@ihes.fr}
\address{Department of Mathematics, Johns Hopkins University\\3400 N Charles St\\ Baltimore, MD, USA 21218\newline
Present Address: Institut des Hautes \'Etudes Scientifiques\\35 Route de Chartres\\Bures-sur-Yvette France 91440}
\subjclass[2010]{12K10}

\begin{abstract}We classify the semifields and division semirings containing the max-plus semifield  $\mathbb{Z}_\mathrm{max}$, which are finitely generated as $\mathbb{Z}_\mathrm{max}$-semimodules.
\end{abstract}

\maketitle
\section{Introduction}
There has been much interest recently in geometry over the tropical semifield $\mathbb{R}_\mathrm{max}=\mathbb{R}\cup\{\infty\}$, in which the addition operation is ${\max}$ and multiplication is given by the usual notion of addition\cite{trop1}.  In this paper, we will instead work with a related semifield $\mathbb{Z}_\mathrm{max}$, which is defined in a similar manner.

The semifield $\mathbb{Z}_\mathrm{max}$ has been studied by A. Connes and C. Consani in connection with the notion of the absolute point\cite{epicyclic}.   In particular, they have studied projective spaces over $\mathbb{Z}_\mathrm{max}$ and shown that they give a realization of J. Tits' ideas on a projective geometry over the "field with one element"\cite{tits}.  

A second motivation for studying $\mathbb{Z}_\mathrm{max}$ comes from the arithmetic site introduced by Connes and Consani in \cite{arithmeticsite}.  This site consists of the semiring $\mathbb{Z}_\mathrm{max}$ viewed as a sheaf of semirings over the topos of $\mathbb{N}^\times$-sets.  The Riemann zeta function counts the fixed $[0,1]_{\max}$-valued points of the Frobenius operator on the arithmetic site, analogous to the way that the zeta function of a function field counts the fixed points of the Frobenius operator on an algebraic curve.  In \cite{arithmeticsite} and \cite{arithmeticsite2}, Connes and Consani have made several steps towards mimicking the proof of the Riemann hypothesis for function fields in the setting of the arithmetic site.   Extensions of $\mathbb{Z}_\mathrm{max}$ play a key role in this theory because the points of the arithmetic site correspond to algebraic extensions.

A natural question that arises is to study the finite extensions of $\mathbb{Z}_\mathrm{max}$, that is semifields containing $\mathbb{Z}_\mathrm{max}$ which are finitely generated as a semimodule.  One reason for studying the finite extensions is geometric in nature.  When studying varieties over a non-algebraically closed field $K$, one needs to consider points with values not only in $K$, but also in finite extensions of $K$.  By analogy, one might expect that points with values in the extensions of $\mathbb{Z}_\mathrm{max}$ will be a necessary ingredient in developing a notion of algebraic geometry over $\mathbb{Z}_\mathrm{max}$ embodying Connes' and Consani's ideas about projective spaces over the field with one element.

In \cite{epicyclic}, Connes and Consani have discovered that for each $n>1$ there is a relative Frobenius map $\mathbb{Z}_\mathrm{max}\rightarrow \mathbb{Z}_\mathrm{max}$.  Furthermore they showed that this map gives a rank $n$ free semimodule $F^{(n)}$ over $\mathbb{Z}_\mathrm{max}$ which is a semifield.  The goal of this paper is to show that these are all of the finite extensions of $\mathbb{Z}_\mathrm{max}$.  

To each extension $L$ of $\mathbb{Z}_\mathrm{max}$, we may associate a group $L^\times/\mathbb{Z}_\mathrm{max}^\times$.  The key to understanding the finite extensions of $\mathbb{Z}_\mathrm{max}$ is corollary \ref{finiteuiandrank} which states that for every finite extension $L$ of $\mathbb{Z}_\mathrm{max}$ the group $L^\times/\mathbb{Z}_\mathrm{max}^\times$ is finite.  

Section \ref{basicdefsect} will give the basic definitions used throughout this paper.  In section \ref{basicdefsect} we will also classify finite extensions of the simplest idempotent semifield $\mathbb{B}$.

Section \ref{sect3} will introduce the notion of the unit index of an extension, which is the order of the group $L^\times/\mathbb{Z}_\mathrm{max}^\times$ associated to an extension $L$ of $\mathbb{Z}_\mathrm{max}$.  To show the theory of extensions with finite unit index is nontrivial, we will give a condition in which the unit index must be finite.  We will also show in theorem \ref{finiteuiclass} that for $n>1$, any extension of $\mathbb{Z}_\mathrm{max}$ has at most one subextension of a given unit index, and we will use this fact in corollary \ref{subextsofreals} to classify finite subextensions of $\mathbb{R}_\mathrm{max}/\mathbb{Z}_\mathrm{max}$.

Most of the results of section \ref{sect3} will be superseded by more general results in later sections.  Thus the reader may skip section 3 except for definition \ref{uidef} and the proof of theorem \ref{selectiveui}.  However section \ref{sect3} provides useful motivation for caring about whether an extension has finite unit index.

In section \ref{finitearchsect}, we will introduce the notion of an archimedean extension of an idempotent semifield.  Roughly speaking $L$ is archimedean  over $K$ if every element of $L$ is bounded above by an element of $K$ in a certain sense.  We will show that every finite archimedean extension of $\mathbb{Z}_\mathrm{max}$ has finite unit index.  

We would like to say that all finite extensions of $\mathbb{Z}_\mathrm{max}$ have finite unit index.  To do this we will show in sections \ref{convexsect} and \ref{maxarchsect} that every finite extension $L$ of any idempotent semifield $K$ is archimedean.  Then the results of section \ref{finitearchsect} will apply.  The strategy to proving this will involve constructing the maximal archimedean subextension $L_\mathrm{arch}$ of the extension $L$ over $K$.  Section \ref{convexsect} is devoted to  introducing a notion of convexity that will allow us to prove in section \ref{maxarchsect} that $L=L_\mathrm{arch}$.  This will imply that $L$ is archimedean.

In section \ref{classificationsect}, we will classify extensions of $\mathbb{Z}_\mathrm{max}$ with finite unit index, by showing in theorem \ref{classificationui} that they are all $F^{(n)}$ for some $n$. Since all finite extensions of $\mathbb{Z}_\mathrm{max}$ have finite unit index, this gives us a classification of the finite extensions.  

Suppose $L$ has finite unit index over $\mathbb{Z}_\mathrm{max}$.  The first step to showing that $L\cong F^{(n)}$ will be to study the structure of the multiplicative group $L^\times$, which we will see to be isomorphic to $\mathbb{Z}$.  To understand the addition, we show that the embedding $\mathbb{Z}_\mathrm{max}\rightarrow L$ tells us how to add $n$th powers.  We then show that this completely determines the additive structure by using lemma \ref{monotonicdivision}, which states that the $n$th root operation is monotonic in a suitable sense.

After studying the finite extensions, in sections \ref{dsasect} and \ref{fdsasect} we will outline how these results may be generalized to the noncommutative case of division semialgebras over $\mathbb{Z}_\mathrm{max}$.

\section{Basic Definitions and Examples}\label{basicdefsect}

\begin{defn}A (commutative) semiring $R$ is a set together with 2 binary operations (called addition and multplication) such that $R$ is a commutative monoid under each operation and the distributive law holds.  It is idempotent if $x+x=x$ for all $x\in R$.  It is selective if for all $x,y\in R$ one has either $x+y=x$ or $x+y=y$.  A semifield is a semiring $R$ in which all nonzero elements are units.\end{defn}

\begin{eg}Let $\mathbb{B}=\{0,1\}$ in which addition is given by $x+0=0+x=0$ for all $x$ and $1+1=1$, and with the obvious notion of multiplication.  Then $\mathbb{B}$ is an idempotent semifield.  More generally let $M$ be a totally ordered abelian group.  Then $M_\mathrm{max}=M\cup\{-\infty\}$ is an idempotent semiring in which addition is $\max$ and multiplication is the group operation of $M$.  Then $\mathbb{B}=M_\mathrm{max}$ where $M$ is the trivial group.\end{eg}

\begin{rmk}\label{elementu} There is an element $u\in \mathbb{Z}_\mathrm{max}$  such that $\mathbb{Z}_\mathrm{max}=\{0\}\cup \{u^n\mid u\in\mathbb{Z}\}$ and $u+1=u$.  We will write elements of $\mathbb{Z}_\mathrm{max}$ this way to avoid the ambiguity between addition in $\mathbb{Z}$ and in $\mathbb{Z}_\mathrm{max}$.\end{rmk}

\begin{defn}An extension $L$ of a semifield $K$ consists of a semifield $L$ and an injective homomorphism $K\rightarrow L$.  The extension is finite if the homomorphism makes $L$ into a finitely generated semimodule.\end{defn}

\begin{eg}\label{totord}Let $M$ be a totally ordered abelian group and $N\subseteq M$ be a subgroup.  Then $M_\mathrm{max}$ is an extension of $N_\mathrm{max}$.\end{eg}

\begin{eg}\label{Fn}Fix a positive integer $n$.  Define a map $\mathbb{Z}_\mathrm{max}\rightarrow\mathbb{Z}_\mathrm{max}$ sending each nonzero element $u^k$ to $u^{nk}$ and sending $0$ to $0$.  Then this homomorphism is injective, so gives an extension which will be denoted $F^{(n)}$.  It is easily checked that $1,u,\ldots,u^{n-1}$ generate $F^{(n)}$ as a semimodule over $\mathbb{Z}_\mathrm{max}$\footnote{In fact this is a minimal set of generators, so $F^{(n)}$ is a rank $n$ semimodule over $\mathbb{Z}_\mathrm{max}$.}, so the extension is finite.
\end{eg}

We will conclude this section by classifying finite extensions of $\mathbb{B}$.  To do this we will need two lemmas. The first of these two lemmas can be obtained by translating a standard result on lattice ordered groups into the language of idempotent semifields.  However, we will give a different, and hopefully simpler, proof.
\begin{lem}\label{monotonicdivision} Let $K$ be an idempotent semifield.  Let $x,y\in K$ be such that $x^n+y^n=y^n$ for some $n>0$.  Then $x+y=y$.\end{lem}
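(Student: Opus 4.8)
The plan is to work throughout with the natural order on the idempotent semifield $K$, defined by $a\le b$ if and only if $a+b=b$. With this convention the additive identity $0$ is the least element (since $0+a=a$ gives $0\le a$ for all $a$), and multiplication by any unit is order preserving. The hypothesis $x^n+y^n=y^n$ says precisely $x^n\le y^n$, and the desired conclusion $x+y=y$ says $x\le y$. I would first dispose of the degenerate case $x=y=0$, where the conclusion is immediate; in every other case $w:=x+y$ is nonzero (as it dominates the nonzero one of $x,y$), hence a unit that may be cancelled from any equation.

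The main computational tool is the idempotent binomial theorem: because $a+a=a$ forces every binomial coefficient to collapse to a single copy, one has $(x+y)^m=\sum_{k=0}^{m}x^ky^{m-k}$ for every $m$, each monomial occurring exactly once. This I would establish by a one-line induction. Applying it with $m=n$ gives $w^n=\sum_{k=0}^{n}x^ky^{n-k}$, and the crux of the proof is to show that this collapses to $y\,w^{n-1}$. Writing $T=\sum_{k=0}^{n-1}x^ky^{n-k}$, so that $w^n=T+x^n$, I note that $T$ contains the summand $y^n$ (the term $k=0$), whence $y^n+T=T$; combining this with the hypothesis $x^n+y^n=y^n$ gives $x^n+T=x^n+y^n+T=y^n+T=T$. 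Thus the top monomial $x^n$ is absorbed and $w^n=T=\sum_{k=0}^{n-1}x^ky^{n-k}=y\sum_{k=0}^{n-1}x^ky^{n-1-k}=y(x+y)^{n-1}=y\,w^{n-1}$. Cancelling the unit $w^{n-1}$ yields $w=y$, i.e. $x+y=y$.

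The step I expect to be the genuine obstacle is locating the right identity to exploit: the insight that the hypothesis is exactly what is needed to swallow the extremal term $x^n$, so that $w^n$ telescopes to $y\,w^{n-1}$ and cancellation finishes the argument. By contrast, the supporting facts — that $0$ is the bottom element so each monomial is nonnegative, that the order is compatible with the two operations, and the idempotent binomial expansion — are routine; I would nonetheless state them explicitly, since the absorption of the $x^n$ term relies on the interplay between the hypothesis and idempotency.
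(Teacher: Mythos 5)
Your proof is correct and takes essentially the same route as the paper's: expand $(x+y)^n$ by the idempotent binomial theorem, use the hypothesis $x^n+y^n=y^n$ to absorb the $x^n$ term, factor the remaining sum as $y(x+y)^{n-1}$, and cancel the unit $(x+y)^{n-1}$. The only differences are presentational --- you spell out the absorption step and the degenerate case more explicitly than the paper, which simply assumes $x,y\neq 0$ and performs the same computation in one line.
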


\begin{proof}We may assume $x,y\neq 0$.  Then $x+y\neq 0$.  We compute $(x+y)^n=x^n+x^{n-1}y+\ldots+xy^{n-1}+y^n=x^{n-1}y+\ldots+xy^{n-1}+y^n
=y(x^{n-1}+x^{n-2}y+\ldots+xy^{n-2}+y^{n-2})=y(x+y)^{n-1}$.  Dividing by $(x+y)^{n-1}$ gives $x+y=y$.\end{proof}

\begin{lem}\label{rootsofunity}Let $K$ be an idempotent semifield, and $x\in K$ be a root of unity.  Then $x=1$.\end{lem}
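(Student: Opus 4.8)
The plan is to read Lemma~\ref{monotonicdivision} as the assertion that taking $n$th roots is monotonic, and then to trap $x$ between $1$ and $1$. Recall that because addition in $K$ is idempotent, the relation $a \leq b \iff a + b = b$ is a partial order (the associated semilattice order), and Lemma~\ref{monotonicdivision} says precisely that $x^n \leq y^n$ implies $x \leq y$; that is, the $n$th power map reflects this order. Since $x$ is a root of unity, there is some $n > 0$ with $x^n = 1$, so that $x^n \leq 1^n$ and $1^n \leq x^n$ both hold, the two inequalities in fact being equalities.

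I would then apply Lemma~\ref{monotonicdivision} twice. Taking the pair $(x,1)$, the hypothesis $x^n + 1^n = 1^n$ reads $1 + 1 = 1$, which holds by idempotency, so the lemma yields $x + 1 = 1$, i.e.\ $x \leq 1$. Taking the reversed pair $(1,x)$, the hypothesis $1^n + x^n = x^n$ again reads $1 + 1 = 1$, so the lemma yields $1 + x = x$, i.e.\ $1 \leq x$. By antisymmetry of $\leq$ (equivalently, since commutativity forces the left-hand sides $x + 1$ and $1 + x$ to agree, while their right-hand sides are $1$ and $x$ respectively), we conclude $x = 1$.

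This argument is essentially immediate once Lemma~\ref{monotonicdivision} is in hand, so there is no serious obstacle; the only point requiring attention is the bookkeeping that makes both applications legitimate, namely that the defining relation $x^n = 1$ collapses each hypothesis to the single idempotent identity $1 + 1 = 1$. One should also note in passing that $x \neq 0$ is automatic, since $x^n = 1 \neq 0$, so no degenerate case intervenes.
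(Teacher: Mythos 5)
Your proof is correct and takes essentially the same approach as the paper: both apply Lemma~\ref{monotonicdivision} twice to sandwich $x$ between the inequalities $x+1=1$ and $1+x=x$. The only (cosmetic) difference is in the second application, where the paper applies the lemma to $x^{-1}$, which is also a root of unity, and then multiplies by $x$, while you apply the lemma directly with the roles of $x$ and $1$ swapped --- a slightly more direct route to the same two identities.
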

\begin{proof}For some $n$, we have $x^n=1$. By lemma \ref{monotonicdivision}, it follows that $x+1=1$.

$x^{-1}$ is also a root of unity so lemma \ref{monotonicdivision} gives $x^{-1}+1=1$.  Hence $x+1=x$.  By transitivity of equality we have $x=1$.\end{proof}

\begin{thm}\label{Balgclosed}Let $L$ be a finite extension of the idempotent semifield $\mathbb{B}$.  Then $L=\mathbb{B}$.\end{thm}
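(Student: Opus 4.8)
The plan is to show that the finite-generation hypothesis forces $L$ to be a \emph{finite set}, after which the two lemmas just proved dispatch the claim immediately. First I would record that $L$ is idempotent: since the embedding $\mathbb{B}\to L$ carries the relation $1+1=1$ into $L$, we get $1_L+1_L=1_L$, and hence for every $x\in L$ one has $x+x=x(1+1)=x$. Thus $L$ is an idempotent semifield and both Lemma \ref{monotonicdivision} and Lemma \ref{rootsofunity} are available with $K=L$.

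The crucial observation is that a finitely generated $\mathbb{B}$-semimodule is automatically finite as a set. Indeed, the scalar semifield $\mathbb{B}=\{0,1\}$ has only two elements, and for any $g\in L$ we have $0\cdot g=0$ and $1\cdot g=g$. Hence if $g_1,\ldots,g_m$ generate $L$ as a $\mathbb{B}$-semimodule, then every element of $L$ is a $\mathbb{B}$-linear combination $\sum_i c_i g_i$ with each $c_i\in\{0,1\}$, i.e.\ a sum $\sum_{i\in S} g_i$ over some subset $S\subseteq\{1,\ldots,m\}$ (the empty sum being $0$). There are only $2^m$ such subsets, so $|L|\le 2^m$ and $L$ is finite. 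This translation of ``finitely generated over $\mathbb{B}$'' into ``finite'' is the one step that carries the real content; everything else is formal.

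With $L$ finite, its group of units $L^\times = L\setminus\{0\}$ is a finite abelian group, so every $x\in L^\times$ satisfies $x^N=1$ for $N=|L^\times|$ and is therefore a root of unity. Applying Lemma \ref{rootsofunity} to the idempotent semifield $L$ gives $x=1$ for every such $x$, whence $L^\times=\{1\}$ and $L=\{0,1\}=\mathbb{B}$. I do not expect a genuine obstacle here: once the finiteness of $L$ is in hand the argument is purely group-theoretic, and the only point requiring care is the elementary but essential remark that $\mathbb{B}$-linear combinations of finitely many generators range over a finite set precisely because the coefficient semifield has exactly two elements.
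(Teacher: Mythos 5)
Your proof is correct and takes essentially the same approach as the paper: deduce that $L$ is a finite set from finite generation over the two-element semifield $\mathbb{B}$, conclude that $L^\times$ is a finite (hence torsion) group, and apply Lemma \ref{rootsofunity} to get $L^\times=\{1\}$. Your explicit $2^m$ bound and the check that $L$ inherits idempotency from $\mathbb{B}$ are details the paper leaves implicit, but they do not change the argument.
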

\begin{proof}Since $L$ is finitely generated as a semimodule over $\mathbb{B}$ and $\mathbb{B}$ is finite, it follows that $L$ is finite.  Then $L^\times$ is a finite group and hence is torsion.   By lemma \ref{rootsofunity}, $L^\times=\{1\}$.  Hence $L=\mathbb{B}$.\end{proof}

\section{Finite subextensions of $\mathbb{R}_\mathrm{max}$ over $\mathbb{Z}_\mathrm{max}$}\label{sect3}

In this section we will associate a number called the unit index to any extension of semifields.  As an application, and as motivation for the approach of later sections, we will classify finite subextensions of the infinite extension $\mathbb{R}_\mathrm{max}$ over $\mathbb{Z}_\mathrm{max}$.  The first step will be to show in theorem \ref{selectiveui} that the finite subextensions have finite unit index.  We will then study the subextensions of $\mathbb{R}_\mathrm{max}$ with finite unit index by relating them to finite subgroups of the circle group $\mathbb{R}/\mathbb{Z}$.

\begin{defn}\label{uidef}Let $L$ be an extension of a semifield $K$.  We define the unit index of the extension to be $\mathrm{ui}(L/K)=|L^\times/K^\times|$.\end{defn}

\begin{eg}Pick $v\in F^{(n)}$ such that $F^{(n)}=\{0\}\cup \{v^k\ \mid k\in\mathbb{Z}\}$.  Then $\mathbb{Z}_\mathrm{max}=\{0\}\cup \{v^{kn}\}$.  Then $(F^{(n)})^\times$ is cyclic with generator $v$ while $\mathbb{Z}_{\mathrm{\max}}^{\times}$ is cyclic with generator $v^n$.  It is easily seen that $\mathrm{ui}(F^{(n)}/\mathbb{Z}_\mathrm{max})=n$.\end{eg}

\begin{defn}A idempotent semigroup $M$ is selective if for all $x,y\in M$ either $x+y=x$ or $x+y=y$.\end{defn}

Of course $\mathbb{R}_\mathrm{max}$ is selective, as is any subsemimodule of $\mathbb{R}_\mathrm{max}$. This property will make it easy to show in the following thoerem that the finite subextensions of $\mathbb{R}_\mathrm{max}$ over $\mathbb{Z}_\mathrm{max}$ have finite unit index. 

\begin{thm}\label{selectiveui}Let $L$ be a finite extension of $\mathbb{Z}_\mathrm{max}$ in which $L$ is selective.  Then $\mathrm{ui}(L/\mathbb{Z}_\mathrm{max})<\infty$.\end{thm}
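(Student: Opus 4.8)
The plan is to exploit selectivity to show that every nonzero element of $L$ is a $\mathbb{Z}_\mathrm{max}$-scalar multiple of one of finitely many generators, so that the cosets of those generators already exhaust $L^\times/\mathbb{Z}_\mathrm{max}^\times$. The whole argument is short once this is set up correctly.

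First I would record the basic consequence of selectivity: if $M$ is a selective idempotent monoid, then an easy induction on the number of summands shows that any finite sum $\sum_{i=1}^k x_i$ equals one of its summands $x_j$. Selectivity handles the case of two summands, and associativity together with the inductive hypothesis propagates this to any finite number, since $\left(\sum_{i=1}^{k-1} x_i\right) + x_k$ is either $x_k$ or equals $\sum_{i=1}^{k-1} x_i$, which by induction is one of $x_1,\ldots,x_{k-1}$.

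Next, since $L$ is a finite extension, I would choose generators $g_1,\ldots,g_m$ of $L$ as a $\mathbb{Z}_\mathrm{max}$-semimodule; after discarding any zero generators we may assume each $g_i\neq 0$, hence each $g_i\in L^\times$ because $L$ is a semifield. Given an arbitrary $x\in L^\times$, write $x=\sum_{i=1}^m a_i g_i$ with $a_i\in\mathbb{Z}_\mathrm{max}$. By the selectivity observation this sum collapses to a single term $a_j g_j$. Because $x\neq 0$ and $0\cdot g_j=0$, the scalar $a_j$ cannot be the zero of $\mathbb{Z}_\mathrm{max}$, so $a_j\in\mathbb{Z}_\mathrm{max}^\times$. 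Hence $x=a_j g_j$ lies in the coset $g_j\,\mathbb{Z}_\mathrm{max}^\times$.

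It then follows that the $m$ cosets $g_1\,\mathbb{Z}_\mathrm{max}^\times,\ldots,g_m\,\mathbb{Z}_\mathrm{max}^\times$ cover $L^\times$, so $\mathrm{ui}(L/\mathbb{Z}_\mathrm{max})=|L^\times/\mathbb{Z}_\mathrm{max}^\times|\le m<\infty$. There is no serious obstacle here; the one point requiring care is confirming that the scalar surviving the collapse of the sum is a unit, which is precisely where the semifield hypothesis ($x\neq 0$) and the semimodule axiom $0\cdot g=0$ enter. The essential mechanism is that selectivity forces sums to degenerate to single terms, which is exactly what fails for non-selective extensions and explains why the more elaborate archimedean machinery of the later sections is needed to treat the general case.
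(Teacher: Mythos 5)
Your proof is correct and follows essentially the same route as the paper: the paper observes that selectivity makes every subset of $L$ closed under addition, so that $S\mathbb{Z}_\mathrm{max}$ is a subsemimodule equal to $L$, which is exactly your statement that every sum of scalar multiples of generators collapses to a single term $a_jg_j$ with $a_j$ a unit. The only cosmetic difference is that you prove the collapse by induction on the number of summands, while the paper packages it as closure of $S\mathbb{Z}_\mathrm{max}$ under addition; the conclusion that the generators surject onto $L^\times/\mathbb{Z}_\mathrm{max}^\times$ is identical.
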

\begin{proof}Note that because $L$ is selective, every subset is closed under addition.  Let $S$ be a finite set generating $L$ as a semimodule over $\mathbb{Z}_\mathrm{max}$.  Without loss of generality, we may assume $0\not\in S$.  $S\mathbb{Z}_\mathrm{max}$ is a subsemimodule of $L$ over $\mathbb{Z}_\mathrm{max}$ because it is closed under scalar multiplation by construction, and because it is closed under addition.  Since $S\subseteq S\mathbb{Z}_\mathrm{max}$, one has $L=S\mathbb{Z}_\mathrm{max}$.  Then $L^\times=S\mathbb{Z}$, and $S$ surjects onto $L^\times/\mathbb{Z}$.  Hence $|L^\times/\mathbb{Z}|\leq |S|<\infty$.\end{proof}

We will see in theorem \ref{finiteuiandrank} that the above theorem holds without the hypothesis that $L$ is selective.  However it will take several sections to develop the machinery necessary to drop this hypothesis. 

For an extension $E$ of $\mathbb{Z}_\mathrm{max}$, it will be helpful to understand the group structure of the quotient group $E^\times/\mathbb{Z}$.  To do this, we will need the following standard lemma.

\begin{lem}\label{ordercount}Let $G$ be a group.  Suppose that for all $n\in\mathbb{N}$, $G$ has at most $n$ elements of order dividing $n$.  Then every finite subgroup of $G$ is cyclic, and there is at most one finite subgroup of a given order.\end{lem}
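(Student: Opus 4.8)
The plan is to prove both assertions by a counting argument on the orders of elements, using the classical identity $\sum_{d\mid m}\varphi(d)=m$ for Euler's totient $\varphi$. First I would fix a finite subgroup $H\subseteq G$ of order $m$ and, for each divisor $d$ of $m$, let $\psi(d)$ denote the number of elements of $H$ of order exactly $d$. Since every element of $H$ has order dividing $m$, these quantities satisfy $\sum_{d\mid m}\psi(d)=m$. The goal is to show $H$ contains an element of order $m$, which forces $H$ to be cyclic.

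The key step is to show that $\psi(d)\in\{0,\varphi(d)\}$ for every $d\mid m$. Suppose $H$ contains at least one element $x$ of order $d$. Then $\langle x\rangle$ is a cyclic subgroup of order $d$, all $d$ of whose elements have order dividing $d$. By the hypothesis applied with $n=d$, the group $G$ has at most $d$ elements of order dividing $d$; hence $\langle x\rangle$ is precisely the set of all such elements in $G$, and a fortiori in $H$. In particular every element of $H$ of order $d$ lies in $\langle x\rangle$, and a cyclic group of order $d$ has exactly $\varphi(d)$ elements of order $d$, so $\psi(d)=\varphi(d)$. If no element of order $d$ exists then $\psi(d)=0$. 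This dichotomy, driven entirely by the at-most-$n$ hypothesis, is the crux of the argument.

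With $\psi(d)\le\varphi(d)$ for all $d\mid m$, comparing $\sum_{d\mid m}\psi(d)=m=\sum_{d\mid m}\varphi(d)$ forces equality termwise, so $\psi(d)=\varphi(d)$ for every $d\mid m$; in particular $\psi(m)=\varphi(m)\ge 1$, giving an element of order $m$ and proving $H$ is cyclic. For the uniqueness claim, suppose $H_1$ and $H_2$ are two finite subgroups of the same order $m$. By what we have just shown each is cyclic of order $m$, and each therefore consists of exactly $m$ elements, all of order dividing $m$. Applying the hypothesis once more with $n=m$, the set of elements of $G$ of order dividing $m$ has size at most $m$, so both $H_1$ and $H_2$ coincide with this single set; hence $H_1=H_2$. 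I do not anticipate a serious obstacle here, as the only real content is the observation that one element of order $d$ already exhausts, via the hypothesis, all elements of order dividing $d$.
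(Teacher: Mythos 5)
Your proof is correct. The paper states this lemma without proof, explicitly flagging it as standard, so there is no in-paper argument to compare against; what you have written is the canonical totient-counting proof (the same argument used to show that finite subgroups of the multiplicative group of a field are cyclic), and every step checks out: the dichotomy $\psi(d)\in\{0,\varphi(d)\}$ follows correctly from the hypothesis applied with $n=d$, since $\langle x\rangle$ already exhibits $d$ elements of order dividing $d$ and must therefore exhaust them all, and the termwise comparison against $\sum_{d\mid m}\varphi(d)=m$ then yields $\psi(m)=\varphi(m)\geq 1$. One small simplification worth noting: the uniqueness claim does not actually need cyclicity. By Lagrange, every element of a subgroup $H$ of order $m$ has order dividing $m$, so $H$ is contained in the set of elements of $G$ of order dividing $m$; that set has at most $m$ elements by hypothesis, so $H$ equals it, and any two subgroups of order $m$ coincide. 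Your route through cyclicity is equally valid, just one step longer than necessary.
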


We will make use of the following corollary with $M=E^\times$.

\begin{cor}\label{onesubgroup}Let $M$ be a torsionfree abelian group.  Let $\mathbb{Z}\subseteq M$ be an infinite cyclic subgroup.  Then for each positive integer $n$, $M/\mathbb{Z}$ has at most one subgroup of order $n$ and all finite subgroups are cyclic.\end{cor}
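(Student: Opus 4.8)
The goal is to deduce Corollary~\ref{onesubgroup} from Lemma~\ref{ordercount} by verifying the latter's hypothesis for the group $G = M/\mathbb{Z}$. The plan is to reduce the claim about subgroups of $M/\mathbb{Z}$ to a counting statement about solutions of an equation inside $M$ itself, where the torsionfree hypothesis can be exploited. First I would observe that it suffices to show that for each positive integer $n$, the group $M/\mathbb{Z}$ has at most $n$ elements of order dividing $n$; once this is established, Lemma~\ref{ordercount} immediately yields both that every finite subgroup is cyclic and that there is at most one of each order.

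So fix $n$ and consider an element $\bar{x} \in M/\mathbb{Z}$ whose order divides $n$, i.e.\ $n\bar{x} = 0$ in $M/\mathbb{Z}$. Lifting to $M$, this says $nx \in \mathbb{Z}$, meaning $nx = mg$ for some integer $m$, where $g$ is a chosen generator of the infinite cyclic subgroup $\mathbb{Z} \subseteq M$. The key step is to bound the number of cosets $\bar{x}$ that can arise this way. The natural approach is to use the torsionfreeness of $M$: if two such elements $x, x'$ satisfy $nx = mg$ and $nx' = mg$ for the \emph{same} integer $m$, then $n(x - x') = 0$, and since $M$ is torsionfree this forces $x = x'$, hence $\bar{x} = \bar{x}'$. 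Thus the coset $\bar{x}$ is determined by the integer $m$ for which $nx = mg$.

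It remains to control which integers $m$ can occur and to see that they fall into at most $n$ distinct classes modulo the relevant identification. Here I would note that replacing the lift $x$ by $x + kg$ (another lift of the same coset $\bar x$) changes $m$ to $m + kn$, so the coset $\bar{x}$ determines $m$ only modulo $n$. Combining this with the injectivity observation above, the map sending $\bar{x}$ to the residue $m \bmod n$ is a well-defined injection from the set of elements of $M/\mathbb{Z}$ of order dividing $n$ into $\mathbb{Z}/n\mathbb{Z}$. This gives at most $n$ such elements, which is exactly the hypothesis of Lemma~\ref{ordercount}.

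The main obstacle, and the only place requiring genuine care, is making precise the claim that $\bar x$ determines $m$ modulo $n$: one must check that the assignment $\bar x \mapsto (m \bmod n)$ is both well defined (independent of the choice of lift $x$) and injective, and these two checks pull in opposite directions, so it is worth writing out both the ``same $m$ forces same coset'' argument (using torsionfreeness) and the ``different lifts shift $m$ by multiples of $n$'' argument together to confirm that the induced map into $\mathbb{Z}/n\mathbb{Z}$ is a bijection onto its image. Everything else is a direct application of the stated Lemma~\ref{ordercount}.
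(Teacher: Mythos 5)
Your proposal is correct and follows essentially the same route as the paper: both reduce via Lemma~\ref{ordercount} to counting elements of order dividing $n$, both observe that a lift $x$ of such an element satisfies $nx \in \mathbb{Z}$ and can be adjusted by elements of $\mathbb{Z}$ to control the resulting integer modulo $n$, and both invoke torsionfreeness to conclude that the equation $nt = mg$ has at most one solution in $M$, so the residue $m \bmod n$ determines the coset. The only difference is presentational: the paper normalizes the lift so that $nx \in \{0,1,\ldots,n-1\}$ and counts directly, while you package the same facts as a well-defined injection into $\mathbb{Z}/n\mathbb{Z}$.
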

\begin{proof}Let $n$ be a positive integer.  By lemma \ref{ordercount} it suffices to show that $M/\mathbb{Z}$ has at most $n$
elements of order dividing $n$.    Let $\bar{x}\in M/\mathbb{Z}$ have order dividing $n$ and let $\hat{x}\in M$ be any lift.  Then $n\hat{x}\in\mathbb{Z}$, and there exists $k\in\mathbb{Z}$ such that $n(\hat{x}-k)\in \{0,1,\ldots,n-1\}$.  Let $x=\hat{x}-k$, which is also a lift of $\bar{x}$ to $M$.  Since $M$ is torsionfree, each equation $nt=m$ with $n,m\in\mathbb{Z}$ has at most one solution $t$.  Since there are $n$ possibilities for $nx$, there are at most $n$ choices for $x$ and hence for $\bar{x}$.\end{proof}

The following theorem is the first hint that the unit index will be relevant to the problem of classifying finite extensions of $\mathbb{Z}_\mathrm{max}$.  Furthermore it will allow us to easily classify those finite extensions which are contained inside $\mathbb{R}_\mathrm{max}$.

\begin{thm}\label{finiteuiclass}Let $E$ be an extension of $\mathbb{Z}_\mathrm{max}$.  Let $n$ be a positive integer.  Then there is at most one subextension $L$ of $E$ such that $\mathrm{ui}(L/\mathbb{Z}_\mathrm{max})=n$.\end{thm}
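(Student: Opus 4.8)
The plan is to reduce the uniqueness of the subextension to the uniqueness of a subgroup of the quotient group $E^\times/\mathbb{Z}_\mathrm{max}^\times$, and then to invoke Corollary \ref{onesubgroup}. First I would record the structural facts about $E^\times$ that make that corollary applicable. Any extension of an idempotent semifield is again idempotent, since $1+1=1$ in the base forces $x+x=x(1+1)=x$ for every nonzero $x$; hence $E$ is idempotent and Lemma \ref{rootsofunity} applies to it, giving that every root of unity in $E^\times$ is $1$, i.e. $E^\times$ is torsion-free. Since our semifields are commutative, $E^\times$ is abelian, and $\mathbb{Z}_\mathrm{max}^\times$ is an infinite cyclic subgroup generated by $u$. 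Thus Corollary \ref{onesubgroup} applies with $M=E^\times$, and $E^\times/\mathbb{Z}_\mathrm{max}^\times$ has at most one subgroup of order $n$.

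The second and key point is that a subextension is determined, as a subset of $E$, by its group of units. Indeed, if $L$ is a subextension then $L$ is itself a semifield, so its nonzero elements are precisely the units of $L$; as the multiplication of $L$ is the restriction of that of $E$, these are also units of $E$, and therefore $L=\{0\}\cup L^\times$ with $L^\times\subseteq E^\times$ a subgroup containing $\mathbb{Z}_\mathrm{max}^\times$. Because $\mathbb{Z}_\mathrm{max}^\times\subseteq L^\times$, the subgroup $L^\times$ is a union of cosets of $\mathbb{Z}_\mathrm{max}^\times$, so it equals the full preimage in $E^\times$ of its image $L^\times/\mathbb{Z}_\mathrm{max}^\times$ under the quotient map.

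Finally I would combine these observations. Suppose $L_1$ and $L_2$ are subextensions of $E$ with $\mathrm{ui}(L_i/\mathbb{Z}_\mathrm{max})=n$. Then $L_1^\times/\mathbb{Z}_\mathrm{max}^\times$ and $L_2^\times/\mathbb{Z}_\mathrm{max}^\times$ are subgroups of $E^\times/\mathbb{Z}_\mathrm{max}^\times$ of order $n$, so by Corollary \ref{onesubgroup} they coincide. Taking preimages in $E^\times$ gives $L_1^\times=L_2^\times$, and hence $L_1=\{0\}\cup L_1^\times=\{0\}\cup L_2^\times=L_2$ as subsets of $E$. Since the operations on each $L_i$ are the restrictions of those on $E$, equality of the underlying sets forces equality of the subextensions.

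The group-theoretic heart of the argument is already isolated in Corollary \ref{onesubgroup} (which in turn rests on torsion-freeness via Lemma \ref{rootsofunity}), so I do not expect that to be the obstacle. The only step needing genuine care is the passage from sets back to subextensions: one must check that $L=\{0\}\cup L^\times$ and that $L^\times$ is recovered as the full preimage of its image, so that equality of unit groups really does force equality of subextensions. Once these formalities are in place the theorem follows immediately.
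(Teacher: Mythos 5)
Your proof is correct and follows essentially the same route as the paper's: both reduce uniqueness to the identity $L=\{0\}\cup L^\times$, pass through the correspondence between subgroups of $E^\times$ containing $\mathbb{Z}_\mathrm{max}^\times$ and subgroups of $E^\times/\mathbb{Z}_\mathrm{max}^\times$, and conclude via Lemma \ref{rootsofunity} and Corollary \ref{onesubgroup}. Your explicit check that $E$ is idempotent (so that Lemma \ref{rootsofunity} applies) is a detail the paper leaves implicit, but otherwise the arguments coincide.
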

\begin{proof}Let $A$ be the set of all subextensions of $E$ over $\mathbb{Z}_\mathrm{max}$.  Let $B$ be the set of subgroups of $E^\times$ containing $\mathbb{Z}_\mathrm{max}^\times=\mathbb{Z}$.  Define a map $\phi:A\rightarrow B$ by $\phi(L)=L^\times$.  If $\phi(L)=\phi(M)$, then $L=\{0\}\cup L^\times=\{0\}\cup M^\times=M$, so $\phi$ is injective.  Let $C$ be the set of subgroups of $E^\times/\mathbb{Z}_\mathrm{max}^\times$.  The fourth isomorphism theorem states that the map $\psi:B\rightarrow C$ given by $\psi(G)=G/\mathbb{Z}_\mathrm{max}^\times$ is a bijection.  Hence the map $A\rightarrow C$ sending $L$ to $L^\times/\mathbb{Z}_\mathrm{max}^\times$  is injective.

This map clearly restricts from an injection from the set of subextensions with unit index $n$ to the set of subgroups of $E^\times/\mathbb{Z}_\mathrm{max}^\times=E^\times/\mathbb{Z}$ with order $n$.  By lemma \ref{rootsofunity} and corollary \ref{onesubgroup}, there is at most one such subgroup.  Hence there is at most one subextension with unit index $n$.\end{proof}

\begin{rmk}Suppose $E$ is selective.  Then if $G$ is a subgroup of $E^\times$ then $\{0\}\cup G$ is a subsemifield of $E$; it is closed under addition because every subset of a selective semigroup is closed under addition.  Since $\phi(\{0\}\cup G=G$, the map $\phi$ from the proof of theorem \ref{finiteuiclass} is bijective in this case.  Hence there is a bijective correspondence between subextensions of $E$ over $\mathbb{Z}_\mathrm{max}$ and subgroups of $E^\times/\mathbb{Z}$.  \end{rmk}

\begin{cor}\label{subextsofreals}Let $L$ be a finite subextension of $\mathbb{R}_\mathrm{max}$ over $\mathbb{Z}_\mathrm{max}$.  Then there exists $n$ such that $L=(\frac{1}{n}\mathbb{Z})_\mathrm{max}$\footnote{This is the semifield associated to the totally ordered subgroup $\frac{1}{n}\mathbb{Z}\subseteq \mathbb{R}$ via example \ref{totord}.  One can easily exhibit an explicit isomorphism of extensions $(\frac{1}{n}\mathbb{Z})_\mathrm{max}\cong F^{(n)}$.  If we identify $F^{(n)}$ with $\mathbb{Z})_\mathrm{max}$ as in example \ref{Fn}, this isomorphism sends $\frac{a}{n}$ to $u^a$.}.
\end{cor}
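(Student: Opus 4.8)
The plan is to produce the relevant integer $n$ as the unit index of $L$ and then to invoke the uniqueness statement of Theorem \ref{finiteuiclass}. First I would observe that $L$, being a subsemifield of $\mathbb{R}_\mathrm{max}$, inherits selectivity: its addition is the restriction of $\max$, so for any $x,y\in L$ one has $x+y\in\{x,y\}$. Since $L$ is moreover finite over $\mathbb{Z}_\mathrm{max}$ by hypothesis, Theorem \ref{selectiveui} applies and shows that $\mathrm{ui}(L/\mathbb{Z}_\mathrm{max})$ is finite. I would then set $n=\mathrm{ui}(L/\mathbb{Z}_\mathrm{max})$.

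Next I would exhibit the candidate $(\tfrac1n\mathbb{Z})_\mathrm{max}$ as a competing subextension of the same ambient semifield. Since $\tfrac1n\mathbb{Z}$ is a subgroup of $\mathbb{R}$ containing $\mathbb{Z}$, Example \ref{totord} shows that $(\tfrac1n\mathbb{Z})_\mathrm{max}$ is a subextension of $\mathbb{R}_\mathrm{max}$ over $\mathbb{Z}_\mathrm{max}$. Its unit group is the additive group $\tfrac1n\mathbb{Z}$, while $\mathbb{Z}_\mathrm{max}^\times=\mathbb{Z}$, so a direct computation gives $\mathrm{ui}((\tfrac1n\mathbb{Z})_\mathrm{max}/\mathbb{Z}_\mathrm{max})=\lvert\tfrac1n\mathbb{Z}/\mathbb{Z}\rvert=n$.

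Finally I would apply Theorem \ref{finiteuiclass} with $E=\mathbb{R}_\mathrm{max}$: both $L$ and $(\tfrac1n\mathbb{Z})_\mathrm{max}$ are subextensions of $\mathbb{R}_\mathrm{max}$ over $\mathbb{Z}_\mathrm{max}$ with unit index $n$, so by the at-most-one conclusion of that theorem they must coincide, giving $L=(\tfrac1n\mathbb{Z})_\mathrm{max}$.

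The argument is essentially a bookkeeping assembly of earlier results, so no single step is a genuine obstacle; the only points requiring care are checking that selectivity descends to $L$ (so that Theorem \ref{selectiveui} is legitimately applicable) and that $(\tfrac1n\mathbb{Z})_\mathrm{max}$ genuinely sits inside the same ambient $E=\mathbb{R}_\mathrm{max}$ with exactly the unit index $n$, since Theorem \ref{finiteuiclass} only guarantees uniqueness relative to a fixed ambient extension. I would also emphasize that finiteness of $L$ over $\mathbb{Z}_\mathrm{max}$ is precisely what licenses Theorem \ref{selectiveui}; without it the unit index could a priori be infinite and the whole strategy would break down.
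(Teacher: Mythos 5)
Your proposal is correct and follows essentially the same route as the paper's own proof: establish selectivity of $L$ from $L\subseteq\mathbb{R}_\mathrm{max}$, apply Theorem \ref{selectiveui} to get finite unit index $n$, check that $(\tfrac1n\mathbb{Z})_\mathrm{max}$ is a subextension of $\mathbb{R}_\mathrm{max}$ with unit index $n$, and conclude by the uniqueness statement of Theorem \ref{finiteuiclass}. The extra care you take in verifying the applicability hypotheses (selectivity descending to $L$, and both candidates living in the same ambient $E=\mathbb{R}_\mathrm{max}$) is exactly the bookkeeping the paper leaves implicit.
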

\begin{proof}Since $L\subseteq\mathbb{R}_\mathrm{max}$, $L$ is selective.  By theorem \ref{selectiveui}, $L$ has finite unit index.  Let $n=\mathrm{ui}(L/\mathbb{Z}_\mathrm{max})$.  Then $(\frac{1}{n}\mathbb{Z})_\mathrm{max}$ has unit index $n$ over $\mathbb{Z}_\mathrm{max}$.  By theorem \ref{finiteuiclass} they are equal.\end{proof}

\section{Finite archimedean extensions of $\mathbb{Z}_\mathrm{max}$}\label{finitearchsect}

In this section, we will give a criterion that is useful for proving an extension has finite unit index.  In later sections, we will use this criterion to prove that every finite extension of $\mathbb{Z}_\mathrm{max}$ has finite unit index.

\begin{defn}Let $K$ be an idempotent semifield.  An extension $L$ over $K$ is called archimedean if for all $x\in L$, there exists $y\in K$ such that $x+y=y$.\end{defn}

The terminology comes from the following example.

\begin{eg}$\mathbb{R}_\mathrm{max}$ can be seen to be an archimedean extension of $\mathbb{Z}_\mathrm{max}$.  This is because of the archimedean property of the real numbers, which states that for every $x\in\mathbb{R}$ there exists $n\in\mathbb{Z}$ such that $x\leq n$ or equivalently $\max{x,n}=n$.  \end{eg}

\begin{lem}\label{archlowerbound}Let $L$ be an archimedean extension of an idempotent semifield $K$.  Then for all nonzero $x\in L$ there exists nonzero $z\in K$ such that $x+z=x$.\end{lem}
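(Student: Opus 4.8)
The plan is to reduce everything to the archimedean hypothesis applied not to $x$ itself but to its multiplicative inverse $x^{-1}$. The point is that the archimedean property supplies \emph{upper} bounds in $K$ (relations of the form $a+y=y$, i.e.\ $a\le y$ in the order $a\le b\iff a+b=b$), whereas the lemma asks for a nonzero \emph{lower} bound in $K$ (a relation $z+x=x$, i.e.\ $z\le x$). Passing to inverses, and using that multiplication by a unit is an order isomorphism, converts one into the other.

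Concretely, I would fix a nonzero $x\in L$ and form $x^{-1}\in L$, which exists because $L$ is a semifield. Applying the archimedean hypothesis to the element $x^{-1}$ yields some $y\in K$ with $x^{-1}+y=y$. I would first dispose of the degenerate case: if $y=0$ then the relation reads $x^{-1}+0=0$, so $x^{-1}=0$, whence $1=x\,x^{-1}=x\cdot 0=0$, contradicting $1\ne 0$ in $L$. Hence $y\ne 0$, and since $K$ is a semifield the inverse $y^{-1}\in K$ exists and is nonzero. I then set $z=y^{-1}$.

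To conclude I would multiply the relation $x^{-1}+y=y$ through by the unit $xy^{-1}$ and expand using distributivity and commutativity: the left-hand side becomes $xy^{-1}x^{-1}+xy^{-1}y=y^{-1}+x$, while the right-hand side becomes $xy^{-1}y=x$, giving $x+z=x$ with $z=y^{-1}\in K$ nonzero, exactly as required. (Conceptually this single multiplication is the statement that the order automorphism ``multiply by $xy^{-1}$'' carries $x^{-1}\le y$ to $y^{-1}\le x$.) I do not expect a real obstacle here: the only genuine idea is to invoke the hypothesis at $x^{-1}$ rather than at $x$, and the only case needing care is ruling out $y=0$; the remaining computation uses nothing beyond the semifield axioms.
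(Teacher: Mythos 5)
Your proof is correct and is essentially identical to the paper's: the paper also applies the archimedean hypothesis to $x^{-1}$, notes the resulting $y$ is nonzero, and multiplies by $xy^{-1}$ to obtain $y^{-1}+x=x$. Your explicit handling of the case $y=0$ just spells out what the paper dismisses as "clearly nonzero."
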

\begin{proof}There is some $y\in K$ such that $x^{-1}+y=y$, which is clearly nonzero.  After multiplying by $xy^{-1}$, we get $y^{-1}+x=x$, so we may take $z=y^{-1}$.\end{proof}

For the remainder of this section, let $L$ be finite and archimedean over $\mathbb{Z}_\mathrm{max}$, and let $S\subseteq L$ be a finite set which generates $L$ as a $\mathbb{Z}_\mathrm{max}$-semimodule.  We may assume $0\not\in S$.  The goal for the remainder of the section will be to show that $\mathrm{ui}(L/\mathbb{Z}_\mathrm{max})<\infty$.   If we can show that $S\mathbb{Z}_\mathrm{max}=\{sx\mid s\in S,\,x\in\mathbb{Z}_\mathrm{max}\}$ is closed under addition, then we can apply the proof of theorem \ref{selectiveui} to prove theorem \ref{archui}.  Unfortunately, there is no reason to believe that it is closed under addition.\footnote{In the case $L=F^{(n)}$, one can show that $L=S\mathbb{Z}_\mathrm{max}$.  The classification theorem that we are working towards will then imply that $S\mathbb{Z}_\mathrm{max}$ is always closed under addition.  However, we do not know a direct way to show that $S\mathbb{Z}_\mathrm{max}$ is already closed under addition without enlarging $S$.}  However, we will see that we can construct a larger, but still finite, generating set $T$ such that $T\mathbb{Z}_\mathrm{max}$ is closed under addition.

\begin{lem}\label{Mlemma}Let $S$ be as above and let $S^{-1}S=\{s_1^{-1}s_2\mid s_1,s_2\in S\}$.  There exists $M\in \mathbb{Z}$ such that $x+u^M=u^M$ and $x+u^{-M}=x$ for all $x\in S^{-1}S$.  Furthermore, any number larger than $M$ also has this property\end{lem}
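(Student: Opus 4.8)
The plan is to read both defining conditions as order statements and then reduce everything to the finiteness of $S^{-1}S$. Recall that in any idempotent semiring the relation $a\le b\iff a+b=b$ is a partial order, and that multiplication by the unit $u$ preserves it; since $u+1=u$ gives $u^0\le u^1$, applying unit multiplication repeatedly shows $u^a\le u^b$ whenever $a\le b$. Under this order the equation $x+u^M=u^M$ says exactly that $x\le u^M$, while $x+u^{-M}=x$ says exactly that $u^{-M}\le x$. Thus the lemma asserts that each $x\in S^{-1}S$ is sandwiched as $u^{-M}\le x\le u^M$ for all sufficiently large $M$, and the two halves correspond to an upper and a lower bound for $x$ by finite powers of $u$.

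First I would record that $S^{-1}S$ is a finite set of nonzero elements: it is finite because $S$ is, and each $s_1^{-1}s_2$ is a product of units (as $0\notin S$), hence nonzero. Now fix $x\in S^{-1}S$. For the upper bound, apply the archimedean hypothesis to obtain $y\in\mathbb{Z}_\mathrm{max}$ with $x+y=y$; since $x\ne 0$ this forces $y\ne 0$ (otherwise $x=x+0=0$), so $y=u^{m(x)}$ for some integer $m(x)$, giving $x\le u^{m(x)}$. For the lower bound, apply lemma \ref{archlowerbound} to obtain a nonzero $z\in\mathbb{Z}_\mathrm{max}$ with $x+z=x$; writing $z=u^{\ell(x)}$ this says $u^{\ell(x)}\le x$.

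Next I would set $M=\max\{\,m(x),\,-\ell(x)\;:\;x\in S^{-1}S\,\}$, which exists because $S^{-1}S$ is finite. For each such $x$ we have $m(x)\le M$, so $u^{m(x)}\le u^M$ and hence $x\le u^{m(x)}\le u^M$ by transitivity, i.e.\ $x+u^M=u^M$; likewise $-M\le\ell(x)$, so $u^{-M}\le u^{\ell(x)}\le x$, i.e.\ $x+u^{-M}=x$. The same transitivity argument, using $u^{M'}\ge u^M$ and $u^{-M'}\le u^{-M}$ for any integer $M'\ge M$, immediately yields the final clause that every number larger than $M$ also has the stated property.

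The steps requiring genuine care, as opposed to the bookkeeping above, are the two supporting structural facts about $\mathbb{Z}_\mathrm{max}$ that I invoked: that the powers of $u$ are monotonic in their exponent (which lets a single large $M$ dominate all the individual bounds and lets the bounds persist for larger $M'$), and that the upper bound furnished by the archimedean hypothesis can be taken to be a finite power $u^{m(x)}$ rather than the absorbing element $0$. Both are short to verify, but they are precisely where the special structure of $\mathbb{Z}_\mathrm{max}$ enters; the remainder is just choosing a maximum over the finite set $S^{-1}S$ and applying transitivity of the order.
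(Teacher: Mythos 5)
Your proof is correct and follows essentially the same route as the paper's: obtain an upper bound via the archimedean hypothesis and a lower bound via lemma \ref{archlowerbound} for each element of the finite set $S^{-1}S$, take a maximum, and use monotonicity of the powers of $u$ (which the paper states as direct computations rather than in order-theoretic language) to conclude, including the final clause. Your explicit check that the archimedean bound $y$ is nonzero, hence a genuine power of $u$, is a small point of care the paper leaves implicit.
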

\begin{proof}Note that if $m>n$ and $x+u^n=u^n$ then $x+u^m=x+u^m+u^n=u^m+u^n=u^m$.  Similarly if $x+u^{-n}=x$ and $m>n$ then $x+u^{-m}=x$.  Since $S^{-1}S$ is finite, these remarks allow us to construct a different value of $M$ for each of the statements, and take the maximum of all of them.  Let $x\in S^{-1}S$.  Then since $L$ is archimedean over $\mathbb{Z}_\mathrm{max}$, there exists $M$ such that $x+u^M=u^M$.  By lemma \ref{archlowerbound}, there exists $M$ such that $x+u^{-M}=x$.\end{proof}

For the remainder of this section we will let $M$ be the value constructed in the previous lemma. 

 Let $T_n=\{s+\displaystyle\sum_{i=1}^{n} u^{k_i}s_i \mid s,s_1,\ldots,s_n\in S,\, k_1,\ldots,k_n\in \{-M,\ldots,0\}\}$.  Let $T=\bigcup_{n\geq 0} T_n$.

\begin{lem}$T_n\subseteq T_{n+1}$ for all $n$.\end{lem}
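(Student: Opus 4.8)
The plan is to verify the inclusion elementwise. I take an arbitrary $z\in T_n$, written as $z=s+\sum_{i=1}^{n}u^{k_i}s_i$ with $s,s_1,\ldots,s_n\in S$ and $k_1,\ldots,k_n\in\{-M,\ldots,0\}$, and I exhibit $z$ as a member of $T_{n+1}$ by appending a single extra summand chosen so that it is absorbed and hence does not alter the value.

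The key observation is that $1\in S^{-1}S$: since $L\neq\{0\}$ the set $S$ is nonempty, and for any $s\in S$ we have $1=s^{-1}s\in S^{-1}S$. By the property defining $M$ in Lemma \ref{Mlemma}, every $x\in S^{-1}S$ satisfies $x+u^{-M}=x$; applying this at $x=1$ gives $1+u^{-M}=1$. Multiplying through by $s$ yields $s+u^{-M}s=s$, so the term $u^{-M}s$ is dominated by the summand $s$ already present in $z$.

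I then add $u^{-M}s$ to $z$. Regrouping by commutativity and associativity and using $s+u^{-M}s=s$ gives $z+u^{-M}s=(s+u^{-M}s)+\sum_{i=1}^{n}u^{k_i}s_i=s+\sum_{i=1}^{n}u^{k_i}s_i=z$. On the other hand, $z+u^{-M}s=s+\sum_{i=1}^{n}u^{k_i}s_i+u^{-M}s$ has exactly the shape of an element of $T_{n+1}$, with new generator $s\in S$ and new exponent $-M\in\{-M,\ldots,0\}$. Hence $z=z+u^{-M}s\in T_{n+1}$, which proves $T_n\subseteq T_{n+1}$.

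I do not anticipate any genuine obstacle here; the only point needing care is locating the correct absorbing term, namely $u^{-M}$ times a generator $s$ that already occurs in $z$, which is precisely what lets the lower-bound relation $x+u^{-M}=x$ of Lemma \ref{Mlemma} (evaluated at $x=1$) do the work. It is worth noting that allowing the exponents to range over $\{-M,\ldots,0\}$ rather than just $\{0\}$ is exactly what makes room for this extra summand.
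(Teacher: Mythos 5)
Your proof is correct, but it works through a different mechanism than the paper's. The paper's argument is pure idempotency: it rewrites $s+\sum_{i=1}^{n}u^{k_i}s_i$ by duplicating a summand that is already present (an extra copy of $u^{k_n}s_n$, or of $s$ itself in the guise $u^{0}s$, which also covers the case $n=0$), so the value is unchanged by $x+x=x$ and the expression visibly has the shape of an element of $T_{n+1}$. You instead append the genuinely new term $u^{-M}s$ and justify its absorption via Lemma \ref{Mlemma}: since $1=s^{-1}s\in S^{-1}S$ (here you need $S$ nonempty and $0\notin S$, both standing assumptions in this section), the lemma gives $1+u^{-M}=1$, hence $s+u^{-M}s=s$. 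Both are valid one-line arguments; the paper's is the more economical, since it uses nothing but idempotency --- no invertibility of generators, no appeal to the defining property of $M$ --- and in particular it shows $T_n\subseteq T_{n+1}$ for \emph{any} choice of bound $M\geq 0$, whether or not it satisfies Lemma \ref{Mlemma}. What your variant buys is an early appearance of the domination technique that the paper genuinely needs only later, in Lemma \ref{negTlemma}, where terms $u^{k_i}s_i$ with $k_i<-M$ are dropped by exactly the same kind of absorption argument.
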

\begin{proof}Let $s+\displaystyle\sum_{i=1}^{n} u^{k_i}s_i \in T_n$.  $s+\displaystyle\sum_{i=1}^{n} u^{k_i}s_i=s+s+\displaystyle\sum_{i=1}^{n} u^{k_i}s_i+u^{k_n}s_n\in T_{n+1}$.\end{proof}

\begin{lem}\label{finiteT}Let $N=(M+1)|S|$.  Then $T=T_N$, and $T$ is finite.\end{lem}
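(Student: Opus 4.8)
The plan is to exploit idempotency to collapse the (potentially long) sums defining elements of $T_n$ down to sums involving only \emph{distinct} summands, of which there are boundedly many. First I would observe that every summand appearing in the defining expression $s+\sum_{i=1}^{n} u^{k_i}s_i$ of an element of $T_n$ has the form $u^k t$ with $k\in\{-M,\ldots,0\}$ and $t\in S$; here the distinguished leading term $s$ is simply the case $k=0$, $t=s$. Since $S$ has $|S|$ elements and there are exactly $M+1$ allowed exponents, there are at most $(M+1)|S|=N$ possible summands altogether.

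Next, using that $L$ is idempotent, so that $a+a=a$ and hence repeated summands may be discarded, together with the commutativity and associativity of addition, I would rewrite an arbitrary $y=s+\sum_{i=1}^{n} u^{k_i}s_i$ as the sum over the \emph{set} $D$ of distinct summands that actually occur. Since the leading summand $s$ lies in $D$ and $|D|\le N$, we may express $y$ as $s$ plus the sum of the at most $N-1$ remaining elements of $D\setminus\{s\}$, each of which has the form $u^k t$ with $k\in\{-M,\ldots,0\}$ and $t\in S$. This is exactly an expression witnessing $y\in T_{N-1}\subseteq T_N$. As $n$ and $y$ were arbitrary, every element of $T=\bigcup_{n\ge 0}T_n$ lies in $T_N$, while $T_N\subseteq T$ trivially; hence $T=T_N$. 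Finiteness is then immediate, since $T_N$ is the image under the addition map of the finite indexing set $S\times\bigl(\{-M,\ldots,0\}\times S\bigr)^{N}$.

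I do not expect a serious obstacle here, as the argument is essentially just idempotency plus counting. The only point demanding care is the bookkeeping around the distinguished leading term: after deleting duplicates one must confirm that the reduced expression still has the precise shape required to lie in some $T_m$ with $m\le N$, i.e.\ that $s$ can be retained as the leading term (it is an element of $S$) and that each surviving term $u^k t$ still satisfies $k\in\{-M,\ldots,0\}$ and $t\in S$. Both hold by construction, so nothing beyond routine verification is needed.
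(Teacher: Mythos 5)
Your proof is correct and takes essentially the same approach as the paper: both arguments rest on idempotency to discard repeated summands together with the count that there are at most $(M+1)|S|=N$ possible summands of the form $u^k t$ with $k\in\{-M,\ldots,0\}$, $t\in S$. The only organizational difference is that the paper removes one duplicate at a time via induction and the pigeonhole principle, whereas you collapse all duplicates in a single step; in both cases the passage from a shorter expression in $T_m$, $m\le N-1$, up to $T_N$ uses the preceding lemma $T_n\subseteq T_{n+1}$.
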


\begin{proof}It suffices to show for each $n$ that $T_n\subseteq T_N$.  We know this in the case where $n\leq N$.  For $n>N$, we proceed by induction.  Let $s+\displaystyle\sum_{i=1}^{n} u^{k_i}s_i\in T_n$.  Since there are $M+1$ choices for $k_i$, and $|S|$ choices for $s_i$,  the pigeon hole principle implies some term is repeated.  Since addition is idempotent, we can remove the repeated term, so $s+\displaystyle\sum_{i=1}^{n} u^{k_i}s_i\in T_{n-1}$.  By the inductive hypothesis, $T_{n-1}\subseteq T_N$, so $T_n\subseteq T_N$.  It is clear that $T_N$ is finite; in fact for any $n$, $T_n$ has at most $|S|^{n+1} (M+1)^n$ elements.\end{proof}

Since $S\subseteq T$, $T$ is also a finite generating set for $L$.  The next step is to show that $T$ is closed under addition.

\begin{lem}\label{negTlemma}Let $x=s+\displaystyle\sum_{i=1}^{n} u^{k_i}s_i$ for some $s,s_1,\ldots,s_n$ where $k_1,\ldots,k_n$ are nonpositive integers.  Then $x\in T$.\end{lem}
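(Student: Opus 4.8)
The plan is to show that in the representation $x=s+\sum_{i=1}^{n}u^{k_i}s_i$ every term whose exponent $k_i$ is strictly below $-M$ is redundant: it is absorbed by the leading summand $s$ and may be deleted without changing $x$. After all such terms are removed the surviving exponents all lie in $\{-M,\ldots,0\}$, so $x$ is visibly an element of some $T_m\subseteq T$.

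First I would record the order-theoretic setup that converts the defining property of $M$ into an absorption statement. On any idempotent semifield the relation ``$a\leq b$'' defined by $a+b=b$ is a partial order, and it is compatible with multiplication: if $a\leq b$ then $ac\leq bc$ for every $c$, since $ac+bc=(a+b)c=bc$. In $\mathbb{Z}_\mathrm{max}$ the relation $1+u=u$ (remark \ref{elementu}) says $1\leq u$, whence $u^p\leq u^q$ whenever $p\leq q$. These two facts are all I will need.

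The key step is the claim that $s+u^{k}s'=s$ for all $s,s'\in S$ and all integers $k\leq -M$. Since $s^{-1}s'\in S^{-1}S$, the choice of $M$ (lemma \ref{Mlemma}) gives $s^{-1}s'+u^{M}=u^{M}$; multiplying by the unit $u^{-M}$ yields $u^{-M}s^{-1}s'\leq 1$. For $k\leq -M$ I multiply by $u^{k+M}$ and use $k+M\leq 0$ together with monotonicity of $u^{\,\cdot}$ to get $u^{k}s^{-1}s'\leq u^{k+M}\leq 1$; multiplying by $s$ then gives $u^{k}s'\leq s$, i.e.\ $s+u^{k}s'=s$. I expect this order chase to be the only delicate point, since one must keep straight the two opposite-direction properties of $M$ and apply each in the right place.

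Finally I would assemble the argument. Given $x=s+\sum_{i=1}^{n}u^{k_i}s_i$ with each $k_i\leq 0$, split the indices into those with $k_i<-M$ and those with $-M\leq k_i\leq 0$. For an index $i$ with $k_i<-M$ the claim gives $s+u^{k_i}s_i=s$; writing $x=(s+u^{k_i}s_i)+R$, where $R$ is the sum of the remaining terms, and using commutativity and associativity of addition shows $x=s+R$, so that term can be dropped. Deleting all terms with $k_i<-M$ rewrites $x$ as $s+\sum_i u^{k_i}s_i$ with every exponent in $\{-M,\ldots,0\}$, which is by definition an element of $T_m$ for $m$ the number of surviving terms. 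Hence $x\in T$.
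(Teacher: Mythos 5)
Your proof is correct and follows essentially the same route as the paper's: show every term with exponent below $-M$ is absorbed by the leading summand $s$, delete those terms, and observe that what remains lies in some $T_m\subseteq T$. The only cosmetic difference is that you derive the absorption $s+u^{k}s_i=s$ from the upper-bound clause $x+u^{M}=u^{M}$ of lemma \ref{Mlemma} (applied to $s^{-1}s_i$ and then scaled down by monotonicity), whereas the paper invokes the lower-bound clause $x+u^{-M}=x$ (applied to $s_i^{-1}s$, using the ``furthermore'' part of the lemma) directly; the two are equivalent under inversion since $S^{-1}S$ is closed under taking inverses.
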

\begin{proof}Suppose $k_i<-M$.  Then by lemma \ref{Mlemma}, $s_i^{-1}s+u^{k_i}=s_i^{-1}s$.  Hence $s+u^{k_i}s_i=s$, so we may drop the term $u^{k_i}s_i$.  After dropping all such terms, we may suppose without loss of generality that $k_i\geq M$ for all $i$.  But then we trivially have $s+\displaystyle\sum_{i=1}^{n} u^{k_i}s_i\in T$.\end{proof}

\begin{lem}Let $n\geq 1$.  Let $z=\displaystyle\sum_{i=1}^{n} u^{k_i}s_i$ with $s_i\in S$ and $k_i\in \mathbb{Z}$.  Then $z\in T\mathbb{Z}_\mathrm{max}$.  Conversely every nonzero element of $T\mathbb{Z}_\mathrm{max}$ has this form for some $n$.\end{lem}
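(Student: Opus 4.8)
The plan is to prove the two directions separately, deriving both from the explicit shape of $T$ together with Lemma \ref{negTlemma}. Recall that, by Lemma \ref{finiteT}, every element of $T=T_N$ can be written as $s+\sum_{i=1}^m u^{k_i}s_i$ with $s,s_i\in S$ and each exponent $k_i\in\{-M,\ldots,0\}$, and that $T\mathbb{Z}_\mathrm{max}$ consists of $0$ together with the products $u^j t$ for $j\in\mathbb{Z}$ and $t\in T$.

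For the converse, I would take a nonzero element $u^j t\in T\mathbb{Z}_\mathrm{max}$, write $t=s+\sum_{i=1}^m u^{k_i}s_i$ as above, and distribute to obtain $u^j t=u^j s+\sum_{i=1}^m u^{j+k_i}s_i$. This already exhibits $u^j t$ as a sum of $m+1$ terms of the form $u^{\ell}s'$ with $s'\in S$ and $\ell\in\mathbb{Z}$, which is exactly the required shape (the exponents $j,j+k_1,\ldots,j+k_m$ are now arbitrary integers, which is allowed). This direction is pure bookkeeping and requires no real idea.

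For the forward direction, given $z=\sum_{i=1}^n u^{k_i}s_i$, the key move is to factor out the largest occurring power. Setting $k=\max_i k_i$, I would write $z=u^k w$ with $w=\sum_{i=1}^n u^{k_i-k}s_i$. Now every exponent $k_i-k$ is nonpositive, and at least one of them (the one attaining the maximum) is zero. Singling out a summand with exponent $0$ as the distinguished term $s\in S$ and leaving the remaining summands, all with nonpositive exponents, in the sum, the element $w$ meets precisely the hypothesis of Lemma \ref{negTlemma}, so $w\in T$. Therefore $z=u^k w\in T\mathbb{Z}_\mathrm{max}$.

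The only step carrying any content is this factoring-out of $u^k$, which converts an arbitrary integer-exponent sum into one with a maximal term normalized to exponent $0$ and all other exponents nonpositive; Lemma \ref{negTlemma} then finishes the argument, so I do not anticipate a genuine obstacle. One point worth recording for consistency with the phrase ``every nonzero element'' is that $z$ is automatically nonzero: since $0\notin S$ each term $u^{k_i}s_i$ is nonzero, and in an idempotent semifield a sum of nonzero elements cannot be $0$ (if $a+b=0$ with $a\neq 0$, then idempotency gives $a=a+(a+b)=(a+a)+b=a+b=0$, a contradiction). This matches the hypothesis $n\geq 1$.
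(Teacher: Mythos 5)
Your proof is correct and matches the paper's argument essentially verbatim: the paper likewise normalizes by the maximal exponent (rearranging so $k_n\geq k_i$ and multiplying by $u^{-k_n}$, which is the same as your factoring out $u^k$), applies Lemma \ref{negTlemma} to conclude the normalized sum lies in $T$, and dismisses the converse as trivial. Your added observation that $z$ is automatically nonzero is a harmless bonus not present in the paper.
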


\begin{proof}After rearranging terms, we may suppose without loss of generality that $k_n\geq k_i$ for all $i$.   Then $u^{-k_n}z=s_n+\displaystyle\sum_{i=1}^{n-1} u^{k_i-k_n}s_i$.  By lemma \ref{negTlemma}, $u^{-k_n}z\in T$.  Hence $z\in T\mathbb{Z}_\mathrm{max}$.

The converse is trivial.\end{proof}

\begin{cor}\label{Tadditivelyclosed}$T\mathbb{Z}_\mathrm{max}$ is closed under addition.\end{cor}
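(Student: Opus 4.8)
The plan is to deduce the corollary directly from the preceding lemma, which characterizes the nonzero elements of $T\mathbb{Z}_\mathrm{max}$ as exactly those expressible in the form $\sum_{i=1}^{n} u^{k_i} s_i$ with $n \geq 1$, all $s_i \in S$, and all $k_i \in \mathbb{Z}$. Once this normal-form description is in hand, closure under addition is almost immediate, since expressions of this shape are visibly stable under concatenation.

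First I would dispose of the degenerate cases involving $0$. Since $0$ is the additive identity and $0 \in T\mathbb{Z}_\mathrm{max}$, any sum in which a summand equals $0$ reduces to the other summand (or to $0$ itself), which lies in $T\mathbb{Z}_\mathrm{max}$ by hypothesis. Thus it remains only to treat $x + y$ where both $x$ and $y$ are nonzero.

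Then I would invoke the converse direction of the preceding lemma to write $x = \sum_{i=1}^{m} u^{k_i} s_i$ and $y = \sum_{j=1}^{m'} u^{l_j} t_j$ with all $s_i, t_j \in S$ and all exponents in $\mathbb{Z}$. Concatenating these two sums produces an expression for $x + y$ of precisely the same type, namely a sum of $m + m' \geq 1$ terms each of the shape $u^{(\cdot)} s$ with $s \in S$. Applying the forward direction of the lemma to this concatenated sum yields $x + y \in T\mathbb{Z}_\mathrm{max}$, which finishes the argument.

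There is essentially no obstacle remaining at this point: all of the real content has already been carried out, namely the construction of $T$, its finiteness in Lemma \ref{finiteT}, the reduction of nonpositive-exponent sums in Lemma \ref{negTlemma}, and the normal-form equivalence in the immediately preceding lemma. The corollary is simply the clean payoff of that work, and the only thing to check is the trivial observation that the family of elements admitting such a normal form is closed under the semiring addition.
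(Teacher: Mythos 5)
Your proof is correct and is exactly the argument the paper intends: the corollary is stated without proof precisely because, as you observe, the normal-form characterization in the preceding lemma makes closure under addition immediate by concatenating the two sums (with the degenerate $0$ cases handled trivially).
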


In what follows, the next theorem will play a similar role to that played by theorem \ref{selectiveui} in section \ref{sect3}.  We will later see that all finite extensions are archimedean, and so this theorem is much more general than it would first appear.

\begin{thm}\label{archui}Let $L$ be a finite archimedean extension of $\mathbb{Z}_\mathrm{max}$.  Then $\mathrm{ui}(L/\mathbb{Z}_\mathrm{max})<\infty$.\end{thm}

\begin{proof}Let $S$ generate $L$ as a semimodule.  Let $T$ be the set defined earlier in this section.   Since $S\subseteq T$, $T$ also generates $L$.  By lemma \ref{finiteT}, $T$ is finite.  By corollary \ref{Tadditivelyclosed}, $T\mathbb{Z}_\mathrm{max}$ is closed under addition.  One can apply the proof of theorem \ref{selectiveui} to show that $T$ surjects onto $L^\times/\mathbb{Z}_\mathrm{max}^\times$.  The result follows.\end{proof}

\section{Convex subsemifields}\label{convexsect}

In this section we introduce the notion of a convex subsemifield of an idempotent semifield.  A convex subsemifield $K\subseteq L$ will have the property that addition in $L/K^\times$ is well-defined.  We will use this property to constrain the possible subextensions of the extension $L$ of $K$.

The following definition is essentially the same as the definition of a convex $\ell$-subgroup given in \cite{lattice}.

\begin{defn}Let $L$ be an idempotent semifield.  A subsemifield $K\subseteq L$ is called convex if for any $x\in L$ such that there exist $y,z\in K$ with $x+y=y$ and $x+z=x$, one has $x\in K$.\end{defn}

\begin{eg}Give to $\mathbb{Z}\times \mathbb{Z}$ the lexicographical order, in which $(a,b)\leq (x,y)$ if $a<x$ or if $a=x$ and $b\leq y$.  Identify $\mathbb{Z}$ with a subgroup of $\mathbb{Z}\times \mathbb{Z}$ by identifying $n$ with $(0,n)$.  Then $\mathbb{Z}_\mathrm{max}\subseteq(\mathbb{Z}\times \mathbb{Z})_\mathrm{max}$ is a convex subsemifield.  This follows from the fact that the inequalities $(0,a)\leq (m,n)\leq (0,b)$ imply $m=0$, and the fact that $x\leq y$ if and only if $\max(x,y)=y$.\end{eg}

If $K\subseteq L$ is a convex subsemifield, we consider an equivalence relation $\sim$ on $L$ given by $x\sim y$ if there exists $u\in K^\times$ with $x=uy$.  We denote the quotient by $L/K^\times$.

\begin{thm}\label{convexquotient}\cite[2.2.1]{lattice}Let $L$ be an idempotent semifield, and $K$ be a convex subsemifield.  Then $L/K^\times$ is an idempotent semifield.\end{thm}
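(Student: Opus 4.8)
The plan is to equip the set $L/K^\times$ with the operations induced from $L$ on representatives and to verify they are well defined and satisfy the semifield axioms. Note first that the class of $0$ is a singleton: if $u\in K^\times$ and $uy=0$ then $y=0$, so as a set $L/K^\times=(L^\times/K^\times)\cup\{[0]\}$. The multiplicative part presents no difficulty, since $K^\times$ is a subgroup of the abelian group $L^\times$: the product $[x][y]:=[xy]$ is well defined and makes $L/K^\times$ a commutative monoid in which every nonzero class $[x]$ is a unit with inverse $[x^{-1}]$. The real content of the theorem lies in the addition $[x]+[y]:=[x+y]$.

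For convenience write $a\le b$ to mean $a+b=b$; one checks this is a partial order for which $+$ is the join and, on nonzero elements, $a\wedge b:=(a^{-1}+b^{-1})^{-1}$ is the meet, and that multiplication by a fixed nonzero element is order preserving. Since $K$ is a subsemifield whose units form a subgroup, $K$ is closed under both $+$ and $\wedge$. The hard part, and the only place convexity is used, is well-definedness of addition: for $u,v\in K^\times$ and nonzero $x,y$ (so that $x+y\neq0$) I must produce $t\in K^\times$ with $ux+vy=t(x+y)$. The idea is to sandwich the ratio. Using $u\wedge v\le u$, $u\wedge v\le v$, $u\le u+v$, $v\le u+v$, together with monotonicity of multiplication and addition and distributivity, one obtains
\[(u\wedge v)(x+y)\ \le\ ux+vy\ \le\ (u+v)(x+y).\]
Multiplying by $(x+y)^{-1}$ shows that $t:=(ux+vy)(x+y)^{-1}$ satisfies $u\wedge v\le t\le u+v$, with $u\wedge v,\,u+v\in K^\times$. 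Convexity of $K$ then forces $t\in K$, and $t$ is nonzero as a ratio of nonzero elements, so $t\in K^\times$ as required. The cases where some of $x,y$ vanish are immediate, since $[0]$ acts as an absorbing/identity element.

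Once addition is well defined, the quotient map $L\to L/K^\times$ is by construction a surjective homomorphism for both operations, so the commutative-monoid laws for $+$ and $\cdot$, the distributive law, and idempotence $[x]+[x]=[x+x]=[x]$ are all inherited from $L$. Every nonzero class is a unit, as noted, and $[0]\neq[1]$ because $0\neq1$ in $L$; hence $L/K^\times$ is an idempotent semifield. I expect the sandwich step to be the sole obstacle: it is exactly where convexity is invoked, and everything else is a routine transport of structure along the quotient map.
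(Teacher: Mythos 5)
Your proof is correct, and while it rests on the same core move as the paper's --- form the ratio $t$ of the two sums and trap it between two elements of $K$ so that convexity applies --- the way you build the trap is genuinely different. The paper changes one representative at a time: it first treats the special case $u+1=u$, where the ratio $z=(x+y)^{-1}(x+uy)$ is sandwiched between $1$ and $u$, and then handles a general scalar $u$ by a two-step reduction (replace $x+y$ by $x+(u+1)y$, then divide through by $u$ to land back in the special case). You instead rescale both summands at once and sandwich $t=(ux+vy)(x+y)^{-1}$ between $u\wedge v=(u^{-1}+v^{-1})^{-1}$ and $u+v$, both of which lie in $K^\times$ because $K$ is a subsemifield. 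This buys you a single uniform argument with no case split and no reduction trick, at the modest cost of introducing the order-theoretic apparatus (monotonicity of the operations and the lower bound $u\wedge v\le u,v$ --- note you never actually need that $\wedge$ is the meet, only that it is a common lower bound lying in $K^\times$). The paper's route is more bare-handed: it never inverts a sum, so it does not need the observation that a sum of nonzero elements is nonzero, but it pays for that with the two-stage case analysis. Your treatment of the zero classes and of the multiplicative structure matches what the paper silently omits as ``the only thing to check is that addition is well defined,'' so nothing is missing.
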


\begin{proof}The only thing to check is that addition is well defined.  Let $x,y\in L$ and $u\in K$.  We must show that $x+y\sim x+uy$.  Equivalently we must show $z\in K$ where $z=(x+y)^{-1}(x+uy)$.

Suppose $u+1=u$.  Then $ux+x=ux$.  Hence $u(x+y)+(x+uy)=u(x+y)$.  Then $u+z=u$.  Also $uy+y=uy$ so $(x+uy)+(x+y)=x+uy$.  Hence $z+1=z$.  Since $1,u\in K$, it follows from convexity that $z\in K$.  Hence $x+y\sim x+uy$.

In general, we have $(u+1)+1=u+1$, so $x+y\sim x+(u+1)y$ and it suffices to show that $x+uy\sim x+(u+1)y$.  Equivalently, it suffices to show that $u^{-1}x+y\sim u^{-1}x+(1+u^{-1}y)$.  But this follows from the case already considered since $(1+u^{-1})+1=1+u^{-1}$.
  \end{proof}

\begin{thm}\label{convexthm}Let $E$ be an extension of an idempotent semifield $K$. Suppose $K\subseteq E$ is convex.  Then the extension $E$ over $K$ has no nontrivial finite subextensions.\end{thm}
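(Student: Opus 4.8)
The plan is to reduce to the already-established fact that $\mathbb{B}$ has no nontrivial finite extensions (Theorem \ref{Balgclosed}) by passing to the quotient $L/K^\times$. Let $L$ be a finite subextension, so that $K\subseteq L\subseteq E$ and $L$ is finitely generated as a $K$-semimodule; the goal is to show $L=K$. The first observation is that $K$ remains convex as a subsemifield of $L$: if $x\in L$ satisfies $x+y=y$ and $x+z=x$ for some $y,z\in K$, then viewing $x$ as an element of $E$ and using that $K$ is convex in $E$ forces $x\in K$. Thus convexity of $K$ is inherited by every intermediate semifield, and in particular by $L$.

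Next I would form the quotient $L/K^\times$. By Theorem \ref{convexquotient} this is again an idempotent semifield, and the quotient map $\pi\colon L\to L/K^\times$ is a surjective homomorphism of semirings. I claim $L/K^\times$ is a finite extension of $\mathbb{B}$. Indeed $\pi$ sends $K^\times$ to the class of $1$ and $0$ to $0$, so $\pi(K)=\{0,1\}=\mathbb{B}$; moreover $0\neq 1$ in the quotient, since $0\sim 1$ would give $0=u$ for some $u\in K^\times$, which is absurd. Hence the induced map $\mathbb{B}\to L/K^\times$ is injective. If $S$ is a finite generating set for $L$ over $K$, then writing any $x\in L$ as $x=\sum_i k_is_i$ with $k_i\in K$ and $s_i\in S$ and applying $\pi$ yields $\pi(x)=\sum_{k_i\neq 0}\pi(s_i)$, so the finite set $\pi(S)$ generates $L/K^\times$ as a $\mathbb{B}$-semimodule.

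Finally, Theorem \ref{Balgclosed} applies to the finite extension $L/K^\times$ of $\mathbb{B}$, yielding $L/K^\times=\mathbb{B}$. This means every nonzero class in $L/K^\times$ equals the class of $1$, i.e. every nonzero $x\in L$ satisfies $x\sim 1$ and hence $x=u$ for some $u\in K^\times$. Therefore $L^\times=K^\times$, and since $L=\{0\}\cup L^\times=\{0\}\cup K^\times=K$, the subextension is trivial.

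The genuinely substantive input is Theorem \ref{convexquotient}, which guarantees that addition descends to $L/K^\times$; everything else is bookkeeping. The step I expect to require the most care is verifying that finite generation passes to the quotient, together with the injectivity of $\mathbb{B}\to L/K^\times$, since without these Theorem \ref{Balgclosed} cannot be invoked. No monotonicity or root-of-unity arguments are needed directly here, as those are already packaged inside the two theorems being cited.
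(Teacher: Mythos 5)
Your proposal is correct and follows essentially the same route as the paper's own proof: pass to the quotient $L/K^\times$ via Theorem \ref{convexquotient}, observe it is a finite extension of $\mathbb{B}$, apply Theorem \ref{Balgclosed}, and conclude $L=K$. The extra details you supply (injectivity of $\mathbb{B}\to L/K^\times$ and the explicit check that $\pi(S)$ generates) are points the paper leaves implicit, but the argument is the same.
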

\begin{proof}Let $L$ be a finite subextension of $E$ over $K$.  Then $K$ is convex in $L$.  Since $L$ is a finite extension, there is a finite set $S$ such that every element $x\in L$ can be written as a finite sum $x=\sum a_i s_i$ for $a_i\in K$ and $s_i$ in $S$.  Then every element of $L/K^\times$ can be written as a finite sum $\bar{x}=\sum \bar{a_i}\bar{s_i}$ where $\bar{a_i}\in K/K^\times=\mathbb{B}$ and $\bar{s_i}$ ranges over a finite set $\bar{S}$.  Hence $L/K^\times$ is a finite extension of $\mathbb{B}$.  By theorem \ref{Balgclosed}, $L/K^\times=\mathbb{B}$.  Hence for all $x\in L$, one has $x=0$ or $x\in K^\times$.  It follows that $L=K$.\end{proof}

\section{The maximal archimedean subextension}\label{maxarchsect}
When thinking about archimedean subextensions of a given extension, a natural question that arises is whether there is a maximal archimedean subextension, which contains every other archimedean subextension.  In this section we will explicitly construct this maximal archimedean subextension.  Applying the results of section \ref{convexsect} in this context will imply all finite extensions are archimedean, and so we may drop the archimedean hypothesis from theorem \ref{archui}.

\begin{defn}\label{maxarchdef}Let $L$ be an extension of an idempotent semifield $K$.  We define $L_\mathrm{arch}=\{x\in L|\ x+y=y,x+z=x\textrm{ for some }z,y\in K\}$.\end{defn}

\begin{lem}$L_\mathrm{arch}$ is a subsemifield of $L$ and contains $K$.\end{lem}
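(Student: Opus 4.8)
The plan is to verify directly that $L_\mathrm{arch}$ is closed under the semifield operations and contains $K$. First I would check that $K \subseteq L_\mathrm{arch}$: for any $x \in K$ we may take $y = z = x$, since the idempotency of addition in $K$ gives $x + x = x$, so both defining conditions $x + y = y$ and $x + z = x$ hold trivially. This immediately yields $0, 1 \in L_\mathrm{arch}$ as well.

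Next I would show closure under multiplication. Suppose $x, x' \in L_\mathrm{arch}$, witnessed by $y, z, y', z' \in K$ with $x + y = y$, $x + z = x$, $x' + y' = y'$, $x' + z' = x'$. The natural guess is that $xx'$ is witnessed by $yy'$ and $zz'$. Multiplying $x + y = y$ by $x'$ and using $x' + y' = y'$ should let me bound $xx'$ above by $yy'$; concretely, from $x + y = y$ I get $xx' + yx' = yx'$, and I then need to absorb $x'$ against $y'$ to land in $K$. The cleanest route is to use the monotonicity packaged in the relation $a + b = b$ (which behaves like $a \le b$): since $x + y = y$ and $x' + y' = y'$, I expect $xx' + yy' = yy'$ to follow by a short computation combining the two inequalities, and dually $xx' + zz' = xx'$ from the lower bounds. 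For inverses, if $x + z = x$ with $z \neq 0$, multiplying by $x^{-1}z^{-1}$ converts this into a statement of the form $x^{-1} + (\text{something in } K) = (\text{something in } K)$, giving the upper bound for $x^{-1}$; the other condition is handled symmetrically from $x + y = y$. This mirrors exactly the manipulation used in the proof of Lemma \ref{archlowerbound}.

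The step I expect to require the most care is closure under addition, so let me sketch it explicitly. Suppose $x, x' \in L_\mathrm{arch}$ with witnesses as above. For the upper bound on $x + x'$, I would take $y'' = y + y' \in K$ (using that $K$ is closed under addition) and check $(x + x') + (y + y') = y + y'$: indeed $x + y = y$ and $x' + y' = y'$ combine, after adding, to give $x + x' + y + y' = y + y'$ by idempotency and rearrangement. For the lower bound I cannot simply add $z$ and $z'$, since I need a single $w \in K$ with $(x + x') + w = x + x'$; here I would use selectivity-free reasoning via Lemma \ref{monotonicdivision} or, more simply, take $w = z + z'$ and verify $(x+x') + (z+z') = x+x'$ using $x + z = x$, $x' + z' = x'$ together with the fact that $z + z'$ is dominated by both $x$ and $x'$ appropriately. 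The main obstacle is confirming that these additive combinations genuinely stay inside $K$ and satisfy both defining inequalities simultaneously; the key technical input throughout is that the relation $a + b = b$ is a translation-invariant, multiplicatively-compatible partial order on any idempotent semifield, which lets me manipulate the witnessing equations just as one manipulates inequalities. Once both bounds are established, $x + x' \in L_\mathrm{arch}$, completing the proof that $L_\mathrm{arch}$ is a subsemifield containing $K$.
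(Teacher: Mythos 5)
Your proposal is correct and takes essentially the same route as the paper: the paper's proof uses exactly the same witnesses ($y_1+y_2$ and $z_1+z_2$ for the sum, $y_1y_2$ and $z_1z_2$ for the product) and likewise treats the containment of $K$ and closure under inverses as trivial. The only cosmetic difference is that the paper verifies $x_1x_2+y_1y_2=y_1y_2$ by the one-line expansion $x_1x_2+(x_1+y_1)(x_2+y_2)=(x_1+y_1)(x_2+y_2)$, whereas you chain the two inequalities through $yx'$ using transitivity of the order $a+b=b$.
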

\begin{proof}Let $x_1,x_2\in L_\mathrm{arch}$.  Then there exists $y_1,y_2,z_1,z_2\in K$ such that $x_1+y_1=y_1$, $x_2+y_2=y_2$, $x_1+z_1=x_1$, and $x_2+z_2=x_2$.  Then $(x_1+x_2)+(y_1+y_2)=y_1+y_2$ and $(x_1+x_2)+(z_1+z_2)=x_1+x_2$.   Thus $x_1+x_2\in L_\mathrm{arch}$.  

Also $x_1x_2+y_1y_2=x_1x_2+(x_1+y_1)(x_2+y_2)=x_1x_2+y_1x_2+x_1y_2+y_1y_2=(x_1+y_1)(x_2+y_2)=y_1y_2$. A similar computation shows $x_1x_2+z_1z_2=x_1x_2$.  Thus $x_1x_2\in L_\mathrm{arch}$.  The rest of the proposition is trivial.
\end{proof}

\begin{prop}Let $L$ be an extension of an idempotent semifield $K$.  $L_\mathrm{arch}$ is the maximal archimedean subextension of $L$; In other words, it is an archimedean subextension and every other archimedean subextension is contained in it.\end{prop}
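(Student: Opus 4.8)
The proposition has two parts to establish: first, that $L_\mathrm{arch}$ is itself an archimedean extension of $K$, and second, that it contains every archimedean subextension of $L$. Given the preceding lemma, which already shows $L_\mathrm{arch}$ is a subsemifield of $L$ containing $K$, the plan is to verify the archimedean property for $L_\mathrm{arch}$ directly from the definition and then handle maximality by an easy set-theoretic comparison.

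For the first part, I would take an arbitrary $x \in L_\mathrm{arch}$ and show there exists $y \in K$ with $x + y = y$, which is precisely what archimedeanness over $K$ demands. But this is immediate from the definition of $L_\mathrm{arch}$: membership already guarantees the existence of $y, z \in K$ with $x + y = y$ and $x + z = x$, and the first of these two conditions is exactly the archimedean condition. So no real work is needed here beyond unwinding the definition, together with the previous lemma's guarantee that $L_\mathrm{arch}$ is a genuine subsemifield (so that "archimedean subextension" makes sense).

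For the second part, suppose $L'$ is any archimedean subextension of $L$ over $K$; I want $L' \subseteq L_\mathrm{arch}$. Take any nonzero $x \in L'$. Since $L'$ is archimedean over $K$, there exists $y \in K$ with $x + y = y$. To show $x \in L_\mathrm{arch}$ I also need some $z \in K$ with $x + z = x$, and this is supplied by lemma \ref{archlowerbound}: applied to the archimedean extension $L'$ over $K$, it yields a nonzero $z \in K$ with $x + z = x$. Thus $x$ satisfies both defining conditions and lies in $L_\mathrm{arch}$; the zero element is trivially in $L_\mathrm{arch}$. Hence $L' \subseteq L_\mathrm{arch}$.

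There is no serious obstacle here; the content of the proposition is really just the observation that the defining conditions of $L_\mathrm{arch}$ package together the upper-bound condition (archimedeanness proper) with the lower-bound condition that lemma \ref{archlowerbound} shows is automatic for any archimedean extension. The one point requiring a little care is ensuring the two defining inequalities are genuinely symmetric in the right way: I must invoke lemma \ref{archlowerbound} to produce the lower bound $z$ for elements of an arbitrary archimedean $L'$, rather than assuming it, since the definition of "archimedean" only directly asserts the existence of upper bounds $y$.
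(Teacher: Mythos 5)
Your proposal is correct and follows essentially the same route as the paper: the first half is an immediate unwinding of the definition, and the second half produces the lower-bound witness $z$ for elements of an arbitrary archimedean subextension. The only cosmetic difference is that you cite lemma \ref{archlowerbound} for this step (and handle $x=0$ explicitly), whereas the paper inlines that lemma's one-line argument by applying the archimedean property to $x^{-1}$.
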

\begin{proof}
By definition, for every $x\in L_\mathrm{arch}$, there exists $y\in K$ such that $x+y=y$.

For the converse let $F$ be an archimedean subextension of $L$ over $K$.  Let $x\in F$.  Then there exists $y$ such that $x+y=y$.  Since $x^{-1}\in F$, there exists a nonzero element $z^{-1}\in K$ such that $x^{-1}+z^{-1}=z^{-1}$ so $x+z=x$.  Since $x\in L$, the above equalities show $x\in L_\mathrm{arch}$.  Hence $F\subseteq L_\mathrm{arch}$.\end{proof}

\begin{thm}$L_\mathrm{arch}$ is a convex subsemifield of $L$.\end{thm}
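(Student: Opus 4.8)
The statement bundles two claims: that $L_\mathrm{arch}$ is a subsemifield of $L$, and that it is convex in $L$. The first claim is precisely the content of the lemma proved immediately above, which checks that $L_\mathrm{arch}$ is closed under addition and multiplication and notes that the remaining requirements (closure under inverses, containment of $0$ and $1$, and containment of $K$) are routine; I would therefore cite that lemma for the subsemifield assertion and devote the proof to convexity. For completeness I would recall that inverse-closure is immediate: if $x\in L_\mathrm{arch}$ with $x+y=y$ and $x+z=x$ for $y,z\in K^\times$, then multiplying these by $x^{-1}y^{-1}$ and $x^{-1}z^{-1}$ respectively exhibits $y^{-1},z^{-1}\in K$ bounding $x^{-1}$, so $x^{-1}\in L_\mathrm{arch}$.

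For convexity, I would unwind the definition for the pair $L_\mathrm{arch}\subseteq L$: I must show that any $x\in L$ for which there exist $y,z\in L_\mathrm{arch}$ with $x+y=y$ and $x+z=x$ already lies in $L_\mathrm{arch}$. By the defining description of $L_\mathrm{arch}$, it suffices to produce elements $a,b\in K$ (the base semifield) with $x+a=a$ and $x+b=x$.

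The tool I would use is the transitivity of the idempotent order encoded by addition: if $p+q=q$ and $q+r=r$ then $p+r=p+q+r=q+r=r$, and dually $p+q=p$ together with $q+r=q$ gives $p+r=(p+q)+r=p+(q+r)=p+q=p$. To obtain the required upper bound in $K$, I would use that $y\in L_\mathrm{arch}$ supplies some $y'\in K$ with $y+y'=y'$; chaining this with $x+y=y$ via the first form yields $x+y'=y'$, so $a=y'$ works. To obtain the lower bound in $K$, I would use that $z\in L_\mathrm{arch}$ supplies some $z'\in K$ with $z+z'=z$; chaining this with $x+z=x$ via the second form yields $x+z'=x$, so $b=z'$ works. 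Hence $x\in L_\mathrm{arch}$, which is exactly convexity.

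These computations are short, so there is no serious obstacle. The only thing to get right is the bookkeeping: one must track which of the two defining relations of $L_\mathrm{arch}$ provides a $K$-upper bound and which provides a $K$-lower bound, and chain each with the matching hypothesis so that the auxiliary elements $y,z$ (which lie in $L_\mathrm{arch}$ but need not lie in $K$) are eliminated in favor of genuine elements of $K$. The essential point is that membership in $L_\mathrm{arch}$ records a $K$-upper bound and a $K$-lower bound simultaneously, and both bounds propagate along the order to $x$.
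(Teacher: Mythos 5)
Your proof is correct and takes essentially the same route as the paper: the paper likewise defers the subsemifield claim to the preceding lemma and proves convexity by exactly your two chaining computations, $x+y'=x+(y+y')=y+y'=y'$ and $x+z'=(x+z)+z'=x+z=x$, where $y',z'\in K$ come from $y,z\in L_\mathrm{arch}$. Your extra remark on inverse-closure is a harmless supplement to what the paper's lemma calls trivial.
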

\begin{proof}
Let $x\in L$.  Suppose there exist $y,z\in L_\mathrm{arch}$ such that $x+y=y$ and $x+z=x$.  By the definition of $L_\mathrm{arch}$, there exist $y',z'\in K$ such that $y+y'=y'$ and $z+z'=z$.  Then $x+z'=(x+z)+z'=x+z=x$ and $x+y'=x+(y+y')=y+y'=y'$.   Hence $x\in L_\mathrm{arch}$.
\end{proof}
\begin{cor}\label{finitearchimedean}Every finite extension $L$ over an idempotent semifield $K$ is archimedean.\end{cor}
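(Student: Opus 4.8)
The plan is to combine the structural facts just established about $L_\mathrm{arch}$ with theorem \ref{convexthm}. We have seen that $L_\mathrm{arch}$ is an archimedean subextension of $L$ over $K$ and, crucially, that $L_\mathrm{arch}$ is convex in $L$. The strategy is to show $L=L_\mathrm{arch}$; since $L_\mathrm{arch}$ is archimedean over $K$, this immediately gives that $L$ is archimedean over $K$, which is exactly the claim.

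First I would observe that $L$ is a finite extension of $L_\mathrm{arch}$, not merely of $K$. Indeed, if $S\subseteq L$ is a finite set generating $L$ as a $K$-semimodule, then since $K\subseteq L_\mathrm{arch}$, every $K$-linear combination of elements of $S$ is in particular an $L_\mathrm{arch}$-linear combination. Hence the same $S$ generates $L$ as an $L_\mathrm{arch}$-semimodule, so $L$ is a finite subextension of itself over $L_\mathrm{arch}$.

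Next I would apply theorem \ref{convexthm} with base semifield $L_\mathrm{arch}$ and ambient extension $L$. Because $L_\mathrm{arch}\subseteq L$ is convex, that theorem tells us the extension $L$ over $L_\mathrm{arch}$ has no nontrivial finite subextensions; that is, every finite subextension of $L$ over $L_\mathrm{arch}$ equals $L_\mathrm{arch}$. Applying this to the finite subextension $L$ of itself from the previous step forces $L=L_\mathrm{arch}$, and archimedeanness of $L$ over $K$ follows at once.

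The only point requiring care, and the closest thing to an obstacle, is the verification that $L$ is finite over the intermediate semifield $L_\mathrm{arch}$ rather than only over $K$; this is the step that lets us treat $L$ itself as a finite subextension and feed it into the convexity machinery. Everything else is a direct invocation of theorem \ref{convexthm} together with the already-proven convexity of $L_\mathrm{arch}$.
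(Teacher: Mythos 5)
Your proof is correct and follows exactly the paper's own argument: establish that $L$ is a finite extension of $L_\mathrm{arch}$, invoke the convexity of $L_\mathrm{arch}$ in $L$ together with theorem \ref{convexthm} to conclude $L=L_\mathrm{arch}$, and deduce archimedeanness. The paper states these steps more tersely; your spelled-out verification that the same generating set works over $L_\mathrm{arch}$ is the implicit content of the paper's first sentence.
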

\begin{proof}$L$ is a finite extension over $L_\mathrm{arch}$ with $L_\mathrm{arch}$ convex inside $L$.  By theorem \ref{convexthm}, $L=L_\mathrm{arch}$.  Hence $L$ is archimedean over $K$.\end{proof}

We can now prove the following generaliztion of theorems \ref{selectiveui} and \ref{archui}
\begin{cor}\label{finiteuiandrank}Let $L$  be a finite extension of $\mathbb{Z}_\mathrm{max}$.  Then $\mathrm{ui}(L/\mathbb{Z}_\mathrm{max})<\infty$.\end{cor}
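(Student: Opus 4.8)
Looking at this final corollary, I can see the structure is set up perfectly for a short proof combining the two previous results.

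The plan is to combine Corollary \ref{finitearchimedean} with Theorem \ref{archui} directly. The key observation is that Corollary \ref{finitearchimedean} establishes that \emph{every} finite extension of an idempotent semifield is archimedean, with no additional hypotheses. Since $\mathbb{Z}_\mathrm{max}$ is an idempotent semifield and $L$ is a finite extension of it, Corollary \ref{finitearchimedean} immediately tells us that $L$ is archimedean over $\mathbb{Z}_\mathrm{max}$. This removes exactly the hypothesis that was the only obstacle to invoking Theorem \ref{archui}.

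Having established that $L$ is a finite archimedean extension of $\mathbb{Z}_\mathrm{max}$, I would then simply apply Theorem \ref{archui}, which concludes that $\mathrm{ui}(L/\mathbb{Z}_\mathrm{max}) < \infty$. The entire argument is a two-step composition: first upgrade ``finite'' to ``finite archimedean'' via the maximal-archimedean-subextension machinery of this section, then feed the result into the archimedean unit-index bound from Section \ref{finitearchsect}.

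There is no real obstacle at this stage, since all the difficult work has been front-loaded into the earlier results. The genuine content lives in Theorem \ref{convexthm} (which shows convex extensions have no nontrivial finite subextensions, via the classification of finite extensions of $\mathbb{B}$) and in the construction of the closed-under-addition generating set $T$ that powers Theorem \ref{archui}. The interplay is that Corollary \ref{finitearchimedean} uses the fact that $L$ is finite over its own maximal archimedean subextension $L_\mathrm{arch}$, which is convex in $L$, forcing $L = L_\mathrm{arch}$. So the only care needed here is to cite the two results in the correct logical order and note that the archimedean hypothesis of Theorem \ref{archui} is now automatically satisfied.

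\begin{proof}By corollary \ref{finitearchimedean}, $L$ is archimedean over $\mathbb{Z}_\mathrm{max}$. Since $L$ is also a finite extension of $\mathbb{Z}_\mathrm{max}$, theorem \ref{archui} applies and gives $\mathrm{ui}(L/\mathbb{Z}_\mathrm{max})<\infty$.\end{proof}
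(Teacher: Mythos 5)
Your proof is correct and is exactly the paper's own argument: the paper's proof of this corollary is literally ``Use corollary \ref{finitearchimedean} and theorem \ref{archui}'', which is the same two-step composition you describe.
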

\begin{proof}Use corollary \ref{finitearchimedean} and theorem \ref{archui}\end{proof}

\section{The classification theorem}\label{classificationsect}

In this section, we will finally prove the classification of finite extensions of $\mathbb{Z}_\mathrm{max}$.

The following lemma is a consequence of the classification of finitely generated abelian groups.

\begin{lem}\label{Zses}Let $M$ be a torsion free abelian group, and $N$ be a finite abelian group.  Suppose there is a short exact sequence $0\rightarrow\mathbb{Z}\rightarrow M\rightarrow N\rightarrow 0$.  Then $M\cong\mathbb{Z}$.\end{lem}

\begin{thm}\label{classificationui}Let $L$ be an extension of $\mathbb{Z}_\mathrm{max}$ with $\mathrm{ui}(L/\mathbb{Z}_\mathrm{max})<\infty$.  Then $L\cong F^{(n)}$ as extensions of $\mathbb{Z}_\mathrm{max}$ for some $n$.\end{thm}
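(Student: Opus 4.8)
The plan is to first pin down the multiplicative group $L^\times$ and then to recover the addition from the order it carries. Observe first that $L$ is itself idempotent: the image of $1\in\mathbb{Z}_\mathrm{max}$ satisfies $1+1=1$, so for any $x\in L$ we have $x+x=x(1+1)=x$. Thus $L$ is an idempotent semifield and the lemmas of the previous sections apply to it. The group $L^\times$ is abelian, and by lemma \ref{rootsofunity} it is torsion free, since any torsion element is a root of unity and hence equal to $1$. The hypothesis that $\mathrm{ui}(L/\mathbb{Z}_\mathrm{max})=n<\infty$ gives a short exact sequence $0\to\mathbb{Z}_\mathrm{max}^\times\to L^\times\to L^\times/\mathbb{Z}_\mathrm{max}^\times\to 0$ in which $\mathbb{Z}_\mathrm{max}^\times\cong\mathbb{Z}$ and the quotient is finite of order $n$. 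Lemma \ref{Zses} then forces $L^\times\cong\mathbb{Z}$.

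Next I would fix a generator $v$ of $L^\times$. Since $\mathbb{Z}_\mathrm{max}^\times=\langle u\rangle$ is a subgroup of index $n$ in $L^\times\cong\mathbb{Z}$, and $n\mathbb{Z}$ is the only such subgroup, we must have $u=v^{\pm n}$; after possibly replacing $v$ by $v^{-1}$ we may assume $u=v^n$. I now bring in the natural order $x\le y\iff x+y=y$, which on the idempotent semifield $L$ is a partial order compatible with multiplication. From remark \ref{elementu} we have $1\le u=v^n$, that is $1^n+v^n=v^n$, so lemma \ref{monotonicdivision} (monotonicity of the $n$th root) yields $1\le v$. Since $v^n=u$ has infinite order we have $v\ne 1$, hence $1<v$. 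Multiplying the relation $1<v$ by powers of $v$ and using translation invariance of the order produces the chain $\cdots<v^{-1}<1<v<v^2<\cdots$.

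It follows that any two elements of $L^\times$ are comparable and that $0$ is the least element, so $L$ is totally ordered; equivalently $L$ is selective. In a totally ordered idempotent semifield addition is given by the maximum, so $L=\{0\}\cup\{v^k\mid k\in\mathbb{Z}\}$ with $v^a+v^b=v^{\max(a,b)}$, and $\mathbb{Z}_\mathrm{max}$ sits inside $L$ via $u^k\mapsto v^{nk}$. This is exactly the description of $F^{(n)}$ given in example \ref{Fn}, and sending the distinguished generator of $F^{(n)}$ to $v$ defines an isomorphism of extensions $L\cong F^{(n)}$.

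The main obstacle is the middle step. Producing the abstract isomorphism $L^\times\cong\mathbb{Z}$ is routine once torsion freeness is in hand, but this alone says nothing about how elements add; a priori the order on $L^\times$ could be trivial and the addition wildly underdetermined. The essential input is lemma \ref{monotonicdivision}: it transports the single comparison $1\le v^n$, which we know only because $v^n=u$ lies in $\mathbb{Z}_\mathrm{max}$, down to $1\le v$. This one deduction is what forces the order on $L^\times$ to be total and thereby rigidifies the entire additive structure.
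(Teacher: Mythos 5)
Your proposal is correct and follows essentially the same route as the paper: the short exact sequence together with lemma \ref{Zses} and torsion-freeness from lemma \ref{rootsofunity} gives $L^\times\cong\mathbb{Z}$, and lemma \ref{monotonicdivision} is then used to transport the known addition of $\mathbb{Z}_\mathrm{max}$ down to $L$. The only cosmetic difference is that the paper applies lemma \ref{monotonicdivision} directly to each pair $(v^a,v^b)$ with $a>b$, whereas you apply it once to the pair $(1,v)$ and then propagate via translation-invariance and transitivity of the natural order; both arguments yield $v^a+v^b=v^{\max(a,b)}$ and hence $L\cong F^{(n)}$.
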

\begin{proof}Fix an element $u\in\mathbb{Z}_\mathrm{max}$ as in remark \ref{elementu}.

We have a short exact sequence $0\rightarrow \mathbb{Z}_\mathrm{max}^\times\rightarrow L^\times \rightarrow L^\times/\mathbb{Z}_\mathrm{max}^\times\rightarrow 0$.  $L^\times$ is torsionfree by lemma \ref{rootsofunity}.  By assumption, $L^\times/\mathbb{Z}_\mathrm{max}^\times$ is finite.  By lemma \ref{Zses} $L^\times\cong \mathbb{Z}$.  Pick a generator $v$ of $L^\times$.  Then $L=\{0\}\cup \{v^k\mid k\in\mathbb{Z}\}$.  Since $u\in\mathbb{Z}_\mathrm{max}\subseteq L$ is nonzero there exists $n\neq 0$ such that $u=v^n$.  By picking the other generator of $L^\times$ if neccessary, we may assume without loss of generality that $n>0$.

To determine the addition in $L$, it suffices to compute $v^a+v^b$ for $a,b\in\mathbb{Z}$.  We may suppose without loss of generality that $a>b$.  Then $(v^a)^n+(v^b)^n=u^a+u^b=u^a=(v^a)^n$.  By lemma \ref{monotonicdivision}, $v^a+v^b=v^a$. 

Hence $L\cong \mathbb{Z}_\mathrm{max}$ under the map sending $v$ to $u$.  Then the extension $L$ of $\mathbb{Z}_\mathrm{max}$ may be identified with the extension given by the composite map $\mathbb{Z}_\mathrm{max}\rightarrow L\cong\mathbb{Z}_\mathrm{max}$ sending $u$ to $u^n$.  But this extension is $F^{{n}}$.\end{proof}

Combining theorem \ref{classificationui} and corollary \ref{finiteuiandrank} gives us the following classification of finite extensions of $\mathbb{Z}_\mathrm{max}$.

\begin{thm}Let $L$ be a finite extension of $\mathbb{Z}_\mathrm{max}$.  Then $L\cong F^{(n)}$ as extensions of $\mathbb{Z}_\mathrm{max}$.\end{thm}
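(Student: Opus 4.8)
The plan is to assemble the two principal results already established, since all the substantive work has been carried out in the preceding sections. The statement is an immediate consequence of corollary \ref{finiteuiandrank} together with theorem \ref{classificationui}, so the proof amounts to invoking these in the correct order.

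First I would apply corollary \ref{finiteuiandrank} to the finite extension $L$ to conclude that $\mathrm{ui}(L/\mathbb{Z}_\mathrm{max}) < \infty$. This is the step where the real difficulty lies, though it has already been resolved earlier: it depends on corollary \ref{finitearchimedean}, asserting that every finite extension is archimedean, and on theorem \ref{archui}, which handles finite archimedean extensions. The archimedean reduction in turn rests on the convexity machinery of section \ref{convexsect} and the construction of the maximal archimedean subextension $L_\mathrm{arch}$ in section \ref{maxarchsect}, culminating in the identity $L = L_\mathrm{arch}$.

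With finite unit index in hand, I would then invoke theorem \ref{classificationui}, which classifies all extensions of $\mathbb{Z}_\mathrm{max}$ of finite unit index as being isomorphic to $F^{(n)}$ for some $n$. Applying this to $L$ yields $L \cong F^{(n)}$ as extensions of $\mathbb{Z}_\mathrm{max}$, completing the argument.

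The main obstacle is not located in this final theorem at all: once the two cited results are available, the proof is a single logical composition requiring no new calculation. The genuine difficulty, already overcome in the body of the paper, is twofold: the passage from \emph{finite} to \emph{finite unit index} via the archimedean property, and the determination of the additive structure of a finite-unit-index extension, which proceeds by identifying $L^\times$ with $\mathbb{Z}$ and pinning down addition through the monotonicity of $n$th roots recorded in lemma \ref{monotonicdivision}.
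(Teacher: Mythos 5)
Your proposal is correct and is exactly the paper's own argument: the theorem is stated there as the immediate combination of corollary \ref{finiteuiandrank} (finite implies finite unit index) with theorem \ref{classificationui} (finite unit index implies $L\cong F^{(n)}$). Your accompanying summary of where the real work lies also accurately reflects the structure of the paper.
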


\section{Division semialgebras with finite unit index}\label{dsasect}
Unlike the previous sections, throughout this section, we will use the term semiring to refer to a possibly noncommutative semiring.

\begin{defn}A division semialgebra over a semifield $K$ is a division semiring $D$ together with an injective homomorphism from $K$ to the center of $D$.  It is finite if $D$ is finite as a left semimodule over $K$.\end{defn}

We define the unit index of a division semialgebra analogously to definition \ref{uidef}.

\begin{lem}\label{noncommmonotonicdivision}Let $D$ be an idempotent division semiring.  Let $x,y\in D$ satisfy $xy=yx$.  Suppose $x^n+y^n=y^n$ for some $n\geq 1$.  Then $x+y=y$\end{lem}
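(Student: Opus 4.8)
The plan is to mimic the proof of Lemma \ref{monotonicdivision} as closely as possible, since that lemma is exactly the commutative special case of what we now wish to prove. The crucial input we are given is that $x$ and $y$ commute; without this hypothesis the manipulations below would fail, because they rely on treating the subexpressions as if they lived in a commutative ring.

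First I would dispose of the degenerate cases, assuming $x,y\neq 0$ so that $x+y\neq 0$ and is therefore a unit in the division semiring $D$. The heart of the argument is to expand $(x+y)^n$ using the binomial-type expansion. Because $xy=yx$, every monomial of length $n$ in $x$ and $y$ can be reordered into the form $x^i y^{n-i}$, so we genuinely get
\begin{equation*}
(x+y)^n = x^n + x^{n-1}y + \cdots + xy^{n-1} + y^n.
\end{equation*}
(Here the idempotence of addition means there are no binomial coefficients to worry about; each distinct monomial $x^i y^{n-i}$ contributes once.) Now I would use the hypothesis $x^n + y^n = y^n$, which says $x^n$ is absorbed by $y^n$, to argue that the leading term $x^n$ may be deleted from this sum. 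The clean way to see this: adding $x^{n-1}y + \cdots + xy^{n-1} + y^n$ to both sides of $x^n + y^n = y^n$ and using idempotence shows the $x^n$ term is redundant.

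Having removed $x^n$, the remaining terms $x^{n-1}y + \cdots + xy^{n-1} + y^n$ all share a common factor. Using commutativity of $x$ and $y$, I would factor out $y$ on the right (equivalently on the left) to obtain
\begin{equation*}
(x+y)^n = \bigl(x^{n-1} + x^{n-2}y + \cdots + xy^{n-2} + y^{n-1}\bigr)\,y = (x+y)^{n-1}y.
\end{equation*}
Then, since $x+y$ is a unit, I multiply both sides on the left by $(x+y)^{-(n-1)}$ to cancel the factor $(x+y)^{n-1}$, yielding $x+y=y$.

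The main subtlety — and where the commutativity hypothesis earns its keep — is the factorization and the cancellation step. In a noncommutative division semiring one must be careful that factoring a common $y$ out of the truncated sum really does reproduce $(x+y)^{n-1}$, and that cancelling $(x+y)^{n-1}$ is legitimate; both of these go through precisely because $xy=yx$ lets us treat the subalgebra generated by $x$ and $y$ as commutative, so that the expansion of $(x+y)^{n-1}$ matches the bracketed factor exactly. I expect this step to be the only real obstacle; the rest is a direct transcription of the commutative proof.
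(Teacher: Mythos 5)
Your proposal is correct and takes essentially the same approach as the paper: the paper's proof of this lemma is literally ``this can be proven as in Lemma \ref{monotonicdivision},'' and your argument is exactly that transcription, with the commutativity hypothesis $xy=yx$ justifying the binomial expansion, the factoring out of $y$, and the cancellation of $(x+y)^{n-1}$, just as in the commutative case.
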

\begin{proof}This can be proven as in lemma \ref{monotonicdivision}\end{proof}

Lemma \ref {noncommmonotonicdivision} provides us with the following analogues of lemma \ref{rootsofunity} and theorems \ref{Balgclosed}.
\begin{cor}Let $D$ be an idempotent division semiring.  Then $D^\times$ is torsion free.\end{cor}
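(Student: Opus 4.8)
The final statement to prove is the corollary that for an idempotent division semiring $D$, the multiplicative group $D^\times$ is torsion free. The plan is to mimic exactly the argument used in the commutative case in Lemma \ref{rootsofunity}, but now invoking the noncommutative monotonicity result, Lemma \ref{noncommmonotonicdivision}, in place of Lemma \ref{monotonicdivision}. The key observation that makes this transfer work is that a torsion element and the identity automatically commute, so the commutativity hypothesis $xy=yx$ in Lemma \ref{noncommmonotonicdivision} will be satisfied for the pairs we care about.

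First I would take $x\in D^\times$ to be a torsion element, so that $x^n=1$ for some $n\geq 1$, and the goal is to show $x=1$. Since $1$ commutes with everything, in particular $x\cdot 1=1\cdot x$, so the hypothesis of Lemma \ref{noncommmonotonicdivision} applies to the pair $(x,1)$. From $x^n+1^n=x^n+1$ — wait, I must be careful: I want to set up the equation $x^n+1=1$. Since $x^n=1$ we have $x^n+1=1+1=1$ by idempotency, so indeed $x^n+1^n=1=1^n$, and Lemma \ref{noncommmonotonicdivision} yields $x+1=1$.

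Next, the same reasoning applies to $x^{-1}$, which is also torsion (indeed $(x^{-1})^n=(x^n)^{-1}=1$) and also commutes with $1$. So Lemma \ref{noncommmonotonicdivision} gives $x^{-1}+1=1$. Multiplying this equation on both sides by $x$ — and here I should use that $x$ commutes with $x^{-1}$ and with $1$, though since we multiply a two-term sum the distributive law suffices regardless of commutativity — yields $1+x=x$, i.e. $x+1=x$. Combining $x+1=1$ with $x+1=x$ by transitivity of equality gives $x=1$. Hence the only torsion element is the identity, so $D^\times$ is torsion free.

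I expect no serious obstacle here, since the proof is structurally identical to Lemma \ref{rootsofunity}; the only point requiring a moment's care is checking that the commutativity hypothesis of Lemma \ref{noncommmonotonicdivision} is met, which is automatic because every element commutes with $1$ (and $x$ commutes with its own powers and inverse). The mild subtlety worth stating explicitly is that when we multiply $x^{-1}+1=1$ through by $x$ we rely only on one-sided distributivity over the finite sum rather than on global commutativity of $D$, so the manipulation is legitimate in the noncommutative setting.
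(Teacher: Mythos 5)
Your proof is correct and matches the paper's own argument, which likewise applies Lemma \ref{noncommmonotonicdivision} with $y=1$ to both $x$ and $x^{-1}$ and concludes $x=1$ by transitivity, exactly as in Lemma \ref{rootsofunity}. The paper's proof is just a terser version of yours (it compresses the $x^{-1}$ step into ``similarly''); your explicit remarks that commutativity with $1$ is automatic and that multiplying through by $x$ needs only distributivity are the right points of care.
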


\begin{proof}Let $x\in D^\times$ be torsion of order $n$.  Since $x^n+1=1$, $x+1=1$.  Similarly $x+1=x$, so $x=1$.\end{proof}

\begin{cor}\label{ncBalgclosed}Let $D$ be a finite division semialgebra over $\mathbb{B}$.  Then $D=\mathbb{B}$.\end{cor}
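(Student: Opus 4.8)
The plan is to follow the proof of Theorem \ref{Balgclosed} almost verbatim, substituting the noncommutative ingredients developed in this section. The only genuine input beyond that theorem's strategy is the preceding corollary, which asserts that $D^\times$ is torsion free for any idempotent division semiring; this replaces the appeal to Lemma \ref{rootsofunity} used in the commutative argument.

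First I would establish that $D$ is a finite set. Choosing a finite generating set $s_1,\ldots,s_m$ for $D$ as a left semimodule over $\mathbb{B}$, every element of $D$ can be written as $\sum_{i} a_i s_i$ with each $a_i\in\mathbb{B}=\{0,1\}$. Since addition is idempotent there are at most $2^m$ distinct such expressions, so $D$ is finite.

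Next, because $D$ is a division semiring, its nonzero elements form a group $D^\times=D\setminus\{0\}$, which is therefore a finite group; in particular every element of $D^\times$ has finite order. On the other hand the preceding corollary shows $D^\times$ is torsion free. A group that is simultaneously torsion and torsion free must be trivial, so $D^\times=\{1\}$, whence $D=\{0\}\cup D^\times=\mathbb{B}$.

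I do not expect a serious obstacle here, as the argument is a direct transcription of the commutative case. The one point worth checking is that the torsion-free corollary is genuinely applicable in this setting: its proof rests on Lemma \ref{noncommmonotonicdivision}, which carries a commutativity hypothesis, but that hypothesis is harmless because one only ever compares powers of a single element with $1$, and these commute automatically. Thus nothing is lost in the passage to the noncommutative setting, and the conclusion $D=\mathbb{B}$ follows exactly as before.
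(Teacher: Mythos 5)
Your proof is correct and matches the paper's intent exactly: the paper leaves this corollary unproved precisely because it follows from the torsion-freeness of $D^\times$ by the same argument as Theorem~\ref{Balgclosed}, which is what you carry out (finiteness of $D$, hence $D^\times$ torsion, hence trivial). Your closing remark about why the commutativity hypothesis in Lemma~\ref{noncommmonotonicdivision} is harmless here is also the right justification, and is the same one implicitly used in the paper's proof that $D^\times$ is torsion free.
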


\begin{thm}Let $D$ be a division semialgebra over $\mathbb{Z}_\mathrm{max}$ with finite unit index.  Then $D$ is selective.\end{thm}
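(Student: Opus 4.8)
The plan is to reduce to the commutative classification already obtained, by first proving that the multiplicative group $D^\times$ is abelian. First I would record two preliminary facts. Since the image of $1\in\mathbb{Z}_\mathrm{max}$ satisfies $1+1=1$ in $D$, every $x\in D$ has $x+x=x(1+1)=x$, so $D$ is an idempotent division semiring and the corollary above applies to give that $D^\times$ is torsion free. Moreover, because $\mathbb{Z}_\mathrm{max}$ maps into the center of $D$, the subgroup $\mathbb{Z}=\mathbb{Z}_\mathrm{max}^\times$ lies in $Z(D^\times)$, so the finiteness of the unit index forces $[D^\times:Z(D^\times)]\leq[D^\times:\mathbb{Z}]=\mathrm{ui}(D/\mathbb{Z}_\mathrm{max})<\infty$.

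The key step, and the one I expect to be the crux of the whole argument, is to apply Schur's theorem: a group whose center has finite index has finite commutator subgroup. Hence $[D^\times,D^\times]$ is finite; but it is a subgroup of the torsion-free group $D^\times$, so it is trivial. Therefore $D^\times$ is abelian, and since $0$ is central this means the multiplication on $D$ is commutative, i.e. $D$ is a commutative idempotent semifield. This passage from the noncommutative hypothesis to commutativity is the main obstacle; once $D^\times$ is shown abelian, everything reduces to results already in hand.

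It then remains only to invoke the commutative theory. Now $D$ is an extension of $\mathbb{Z}_\mathrm{max}$ with $\mathrm{ui}(D/\mathbb{Z}_\mathrm{max})<\infty$, so theorem \ref{classificationui} gives $D\cong F^{(n)}$ for some $n$. Finally $F^{(n)}$ is selective: it is isomorphic to the subsemifield $(\frac{1}{n}\mathbb{Z})_\mathrm{max}$ of $\mathbb{R}_\mathrm{max}$ (as in corollary \ref{subextsofreals}), and every subsemimodule of the selective semifield $\mathbb{R}_\mathrm{max}$ is selective. Alternatively, selectivity can be checked directly from $D=\{0\}\cup\{v^k\mid k\in\mathbb{Z}\}$ with $u=v^n$: for nonzero $v^a,v^b$ with $a\geq b$ the elements commute and $(v^b)^n+(v^a)^n=u^b+u^a=u^a=(v^a)^n$, whence lemma \ref{noncommmonotonicdivision} yields $v^b+v^a=v^a$. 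Either way $D$ is selective.
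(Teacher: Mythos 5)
Your proof is correct, but it takes a genuinely different route from the paper's. The paper's argument is a three-line direct one: after reducing to $y=1$, Lagrange's theorem in the finite group $D^\times/\mathbb{Z}_\mathrm{max}^\times$ gives $x^n\in\mathbb{Z}_\mathrm{max}$ where $n=\mathrm{ui}(D/\mathbb{Z}_\mathrm{max})$; selectivity of $\mathbb{Z}_\mathrm{max}$ gives $x^n+1=x^n$ or $x^n+1=1$; and since $x$ commutes with $1$, lemma \ref{noncommmonotonicdivision} yields $x+1=x$ or $x+1=1$. Note the direction of the paper's logic: selectivity is proved first and is then the tool (via lemma \ref{selmonotonicdivision} and theorem \ref{divonecyclic}) for showing $D^\times/\mathbb{Z}_\mathrm{max}^\times$ is cyclic and hence that $D^\times$ is abelian, which is what theorem \ref{ncclassificationui} needs. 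You reverse this: you obtain commutativity immediately from Schur's theorem (a center of finite index forces a finite commutator subgroup, which must be trivial inside the torsion-free group $D^\times$), then invoke the commutative classification of theorem \ref{classificationui} and read selectivity off from $F^{(n)}$. Your argument is not circular --- torsion-freeness of $D^\times$ and theorem \ref{classificationui} both precede this theorem in the paper, and Schur's theorem is external --- and it has a real payoff: with commutativity in hand from the start, theorem \ref{divonecyclic}, its corollary, and lemma \ref{selmonotonicdivision} become unnecessary for the final classification in theorem \ref{ncclassificationui}. What you give up is elementarity: the paper proves this statement with nothing beyond Lagrange's theorem and a lemma already at hand, whereas you import a nontrivial group-theoretic theorem plus the full weight of the commutative classification to establish what, in the paper's architecture, is a lightweight stepping stone rather than a consequence of the endpoint.
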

\begin{proof}Let $x,y\in D$.  We wish to show either $x+y=x$ or $x+y=y$.  If either of $x$ or $y$ is zero, we are done.  It suffices to show $xy^{-1}+1=xy^{-1}$ or $xy^{-1}+1=1$.  In other words, we can assume without loss of generality that $y=1$.

Let $n=\mathrm{ui}(D/\mathbb{Z}_\mathrm{max})$.  Then by Lagrange's theorem $x^n\in \mathbb{Z}_\mathrm{max}$.  Since $\mathbb{Z}_\mathrm{max}$ is selective, $x^n+1=1$ or $x^n+1=x^n$.  Since $x$ commutes with $1$, we may apply lemma \ref{noncommmonotonicdivision} to see that $x+1=1$ or $x+1=x$.\end{proof}

When $D$ is selective, the following lemma shows we can remove the commutativity hypothesis of lemma \ref{noncommmonotonicdivision}.

\begin{lem}\label{selmonotonicdivision}Let $D$ be a selective idempotent division semiring.  Suppose $x,y\in D$ satisfy $x^n+y^n=y^n$ for some $n\geq 1$.  Then $x+y=y$.\end{lem}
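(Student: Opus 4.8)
The plan is to exploit selectivity to turn the additive structure of $D$ into a total order that is compatible with multiplication on both sides, and then to replace the binomial expansion used in Lemma \ref{monotonicdivision} (which is unavailable without commutativity) by a telescoping chain of inequalities together with cancellation by units. In effect, selectivity buys back exactly what commutativity provided in the earlier lemma.

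First I would dispose of the degenerate cases: if $y=0$ then $x^n=0$ forces $x=0$, since the nonzero elements of $D$ are units and a product of units is never $0$; and if $x=0$ the conclusion is immediate. So I may assume $x$ and $y$ are both units. Next I would introduce the relation $a\le b$ defined by $a+b=b$. Selectivity together with the commutativity of addition shows this relation is total, idempotence gives reflexivity, and antisymmetry and transitivity follow from the semiring axioms, so $\le$ is a total order on $D$. Crucially, distributivity makes $\le$ compatible with multiplication on both sides: if $a+b=b$ then $ca+cb=c(a+b)=cb$ and $ac+bc=(a+b)c=bc$, so $ca\le cb$ and $ac\le bc$ for every $c$, invertible or not.

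With this order in hand, the hypothesis $x^n+y^n=y^n$ reads $x^n\le y^n$, and the conclusion $x+y=y$ reads $x\le y$. By totality we may assume $y\le x$, as otherwise $x\le y$ and we are done. Multiplying $y\le x$ on the left by $x^k$ and on the right by $y^{n-1-k}$ yields $x^k y^{n-k}\le x^{k+1} y^{n-1-k}$ for each $k$, giving the chain $y^n\le x y^{n-1}\le\cdots\le x^n$. Hence $y^n\le x^n$, and combined with $x^n\le y^n$ this forces $x^n=y^n$.

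The decisive step is then the collapse of this chain: since $\le$ is antisymmetric and every intermediate term is squeezed between the equal endpoints $y^n$ and $x^n$, each inequality must be an equality; in particular $y^n=x y^{n-1}$. Because $y$ is a unit I may cancel $y^{n-1}$ on the right to conclude $x=y$, whence $x+y=y$. I expect the main subtlety to be verifying that $\le$ is genuinely a two-sided compatible total order, and in particular pinning down that it is selectivity, rather than commutativity of multiplication, that licenses the monotonicity on both sides; once that is in place, everything afterward is a formal consequence of working inside a totally ordered multiplicative monoid with cancellation by units.
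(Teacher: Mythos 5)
Your proof is correct, and it organizes the argument differently from the paper. The paper proceeds by minimal counterexample: taking $n$ smallest with $x^n+y^n=y^n$ and supposing $x+y\neq y$, selectivity gives $x+y=x$, whence $xy^{n-1}=(x+y)y^{n-1}=xy^{n-1}+y^n=xy^{n-1}+x^n+y^n$, so $xy^{n-1}+x^n=xy^{n-1}$; left-cancelling the unit $x$ yields $x^{n-1}+y^{n-1}=y^{n-1}$, contradicting minimality. In your language this is a descent $x^n\le y^n\Rightarrow x^{n-1}\le y^{n-1}\Rightarrow\cdots\Rightarrow x\le y$, run one step at a time under the standing assumption $y\le x$; indeed the paper's first displayed identity is exactly your first chain link $y^n\le xy^{n-1}$. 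You avoid the induction entirely: from $y\le x$ you build the whole telescoping chain $y^n\le xy^{n-1}\le\cdots\le x^n$ (each link being the single inequality $y\le x$ multiplied by $x^k$ on the left and $y^{n-1-k}$ on the right, so no commutation of $x$ past $y$ is ever invoked), squeeze it against $x^n\le y^n$ to force equality throughout, and cancel one unit from $xy^{n-1}=y^n$ to conclude $x=y$ in the residual case. Both proofs consume selectivity at exactly one point and for the same purpose --- totality, i.e.\ $x+y=x$ when $x+y\neq y$ --- and both rest on one-sided cancellation by units; your version is non-inductive and yields the slightly sharper dichotomy that either $x+y=y$ outright or $x=y$. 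One small correction to your closing remark: two-sided monotonicity of $\le$ is licensed by distributivity alone and holds in any idempotent semiring; what selectivity buys, as a replacement for the hypothesis $xy=yx$ of lemma \ref{noncommmonotonicdivision}, is only the totality used to place $y\le x$ in the bad case.
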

\begin{proof}The lemma is clear if $x=0$.  Let $n$ be the smallest number satisfying the hypotheses of the lemma.  Suppose $x+y\neq y$.  Then $x+y=x$ since D is selective.  Note that $xy^{n-1}=(x+y)y^{n-1}=xy^{n-1}+y^n=xy^{n-1}+x^n+y^n$.  Consequently $xy^{n-1}+x^n=xy^{n-1}$.  Dividing by $x$ gives $y^{n-1}+x^{n-1}=y^{n-1}$, contradicting minimality.  Thus $x+y=y$.
\end{proof}

\begin{thm}\label{divonecyclic}Let $D$ be a division semialgebra over $\mathbb{Z}_\mathrm{max}$ with finite unit index.  Let $G=D^\times/\mathbb{Z}_\mathrm{max}^\times$.  Then $G$ has at most one cyclic subgroup of each order.\end{thm}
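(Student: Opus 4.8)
The plan is to exploit selectivity, which the preceding theorem guarantees for $D$, to put a translation-invariant total order on $D^\times$, and then to count torsion elements of $G$ in the style of corollary \ref{onesubgroup}, taking care that $G$ need not be abelian. Concretely, I would define $x\le y$ to mean $x+y=y$; idempotency and commutativity of addition make this a partial order, selectivity makes it total, and left and right distributivity make it invariant under left and right multiplication by units. Thus $(D^\times,\le)$ is a totally ordered group in which $\mathbb{Z}_\mathrm{max}^\times$ sits as a \emph{central} subgroup, centrality coming from the fact that the structure map $\mathbb{Z}_\mathrm{max}\to D$ lands in the center of $D$. Write $\mathbb{Z}_\mathrm{max}^\times=\{u^k\mid k\in\mathbb{Z}\}$; since $\mathrm{ui}(D/\mathbb{Z}_\mathrm{max})<\infty$, the group $G=D^\times/\mathbb{Z}_\mathrm{max}^\times$ is finite.

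First I would record the key consequence of lemma \ref{selmonotonicdivision}, namely that the $n$th power map is injective on $D^\times$: if $x^n=y^n$, then both $x^n+y^n=y^n$ and $y^n+x^n=x^n$ hold, so the lemma yields $x\le y$ and $y\le x$, whence $x=y$ by antisymmetry. This injectivity is exactly what replaces the torsion-freeness input used in the commutative argument. Next I would bound, for each positive integer $n$, the number of elements of $G$ of order dividing $n$. If $\bar x\in G$ has order dividing $n$ and $x\in D^\times$ is any lift, then $x^n\in\mathbb{Z}_\mathrm{max}^\times$, say $x^n=u^k$. Because $u$ is central, replacing $x$ by $xu^j$ changes $x^n$ to $x^n u^{jn}=u^{k+jn}$, so after adjusting the lift we may assume $x^n=u^{k_0}$ with $0\le k_0<n$; this distinguished lift, and hence $k_0$, is uniquely determined by $\bar x$. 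The assignment $\bar x\mapsto k_0$ is then injective by the injectivity of the $n$th power map: if two distinguished lifts $x,y$ satisfy $x^n=u^{k_0}=y^n$, then $x=y$, so $\bar x=\bar y$. Hence $G$ has at most $n$ elements of order dividing $n$.

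Finally I would invoke lemma \ref{ordercount}: since for every $n$ the group $G$ has at most $n$ elements of order dividing $n$, every finite subgroup of $G$ is cyclic and there is at most one finite subgroup of each order; in particular there is at most one cyclic subgroup of each order, which is the asserted conclusion. (Since $G$ is finite, this in fact forces $G$ itself to be cyclic.)

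The main obstacle is the noncommutativity of $G$: unlike in corollary \ref{onesubgroup}, one cannot appeal to an ambient abelian structure, so two ingredients must carry the load. The injectivity of the power map is supplied by the selective form of monotonic division (lemma \ref{selmonotonicdivision}), which is precisely the tool that survives the passage from the commutative lemma \ref{monotonicdivision} to the noncommutative setting, and the centrality of $\mathbb{Z}_\mathrm{max}^\times$ is what makes $(xu^j)^n=x^n u^{jn}$ valid so that lifts can be normalized. Once both are secured, the element count feeds directly into the general-group lemma \ref{ordercount}, and no commutativity of $G$ is needed.
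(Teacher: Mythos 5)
Your proof is correct, but it takes a genuinely different route from the paper. The paper argues subgroup-by-subgroup: given a cyclic subgroup $C$ of order $n$ with generator $g$ and lift $\hat{g}$ satisfying $\hat{g}^n=u^k$, it first shows $\gcd(n,k)=1$ (via torsion-freeness of $D^\times$), then uses B\'ezout to replace $g$ by a new generator $g'=g^b$ whose lift $\hat{g}'=u^a\hat{g}^b$ is an exact $n$th root of $u$; two cyclic subgroups of the same order then yield two $n$th roots of $u$, which coincide by lemma \ref{selmonotonicdivision}, forcing the subgroups to be equal. You instead transplant the counting strategy of corollary \ref{onesubgroup} to the noncommutative setting: you normalize an arbitrary lift of each torsion element so that its $n$th power is $u^{k_0}$ with $0\le k_0<n$ (no gcd or B\'ezout needed, only centrality of $u$), use lemma \ref{selmonotonicdivision} to get injectivity of the $n$th power map in place of torsion-freeness, conclude that $G$ has at most $n$ elements of order dividing $n$, and feed this into lemma \ref{ordercount} --- which, as stated in the paper, applies to arbitrary groups, so no abelianness is required. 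Both proofs rest on the same two pillars (selectivity of $D$ from the preceding theorem, hence lemma \ref{selmonotonicdivision}, plus centrality of $\mathbb{Z}_\mathrm{max}^\times$), but your version buys more: it delivers at most one finite subgroup of \emph{each} order (not just cyclic ones) and yields the paper's subsequent corollary that $G$ is cyclic as an immediate byproduct, whereas the paper's B\'ezout normalization is more hands-on and produces an explicit distinguished generator for each cyclic subgroup. The only inefficiency in your write-up is the totally ordered group scaffolding: translation-invariance of the order is never used, since antisymmetry (i.e.\ commutativity of addition) is all your injectivity argument needs.
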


\begin{proof}Let $C\subseteq G$ be a cyclic subgroup of order $n$.  Let $g$ generate $C$.  Let $\hat{g}\in D^\times$ be in the preimage of $g$.  Then $\hat{g}^n\in \mathbb{Z}_\mathrm{max}^\times$.  Let $u$ denote the standard generator of $\mathbb{Z}_\mathrm{max}^\times$ as in remark \ref{elementu}.  Then there exists $k$ such that $\hat{g}^n=u^k$.

Let $d=\gcd(n,k)$.  Then $(\hat{g}^{n/d})^d=(u^{k/d})^d$.  Since $u^{k/d}$ is central, $(u^{k/d}\hat{g}^{n/d})^d=1$.  Hence $u^{k/d}\hat{g}^{n/d}=1$.  By looking at the image in $G$, we get $g^{n/d}=1$.  Since $g$ has order $n$, $d=1$.

There exist integers $a,b$ such that $an+bk=1$.  Let $g'=g^b$; note that $g'$ also generates $C$ since $\gcd(b,n)=1$.  Let $\hat{g}'=u^a\hat{g}^b$, which is a lift of $g'$.  Then $\hat{g}'^n=u^{an}\hat{g}^{bn}=u^{an}u^{bk}=u$.

Let $H\subseteq G$ be another cyclic subgroup of order $n$.  For any generator $h\in H$, the above argument gives us a new generator $h'\in H$ and a lift $\hat{h}'\in D^\times$ such that $\hat{h}'^n=u$.

Since $\hat{g}'^n=\hat{h}'^n$, we have $\hat{g}'^n=\hat{g}'^n+\hat{h}'^n=\hat{h}'^n$.  By lemma \ref{selmonotonicdivision}, $\hat{g}'=\hat{g}'+\hat{h}'=\hat{h}'$.  Projecting down to $G$ gives $g'=h'$.  Hence $C=H$.\end{proof}

\begin{cor}Let $D$ and $G$ be as in theorem \ref{divonecyclic}.  Then $G$ is cyclic.\end{cor}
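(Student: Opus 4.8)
The plan is to deduce cyclicity of $G$ by invoking the already-established Lemma \ref{ordercount}. First I would observe that $G=D^\times/\mathbb{Z}_\mathrm{max}^\times$ is a finite group, since by hypothesis its order is the unit index $\mathrm{ui}(D/\mathbb{Z}_\mathrm{max})<\infty$. To apply Lemma \ref{ordercount} and conclude that every finite subgroup of $G$ --- in particular $G$ itself --- is cyclic, it suffices to verify the hypothesis of that lemma: for every $n\in\mathbb{N}$, the group $G$ has at most $n$ elements of order dividing $n$.

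The key step is a counting argument translating the ``at most one cyclic subgroup of each order'' conclusion of Theorem \ref{divonecyclic} into a bound on the number of elements of each order. Fix $n$. Every element of $G$ of order dividing $n$ has some exact order $d$ with $d\mid n$. Each such element generates a cyclic subgroup of order $d$, and conversely each cyclic subgroup of order $d$ contains exactly $\varphi(d)$ elements of order $d$ (its generators), with distinct cyclic subgroups of order $d$ sharing no element of order $d$. Hence the number of elements of order exactly $d$ equals $\varphi(d)$ times the number of cyclic subgroups of order $d$, which by Theorem \ref{divonecyclic} is at most $\varphi(d)$. Summing over the divisors of $n$ and using the identity $\sum_{d\mid n}\varphi(d)=n$ shows that $G$ has at most $n$ elements of order dividing $n$.

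With this hypothesis verified, Lemma \ref{ordercount} applies and shows that every finite subgroup of $G$ is cyclic. Since $G$ is itself finite, it follows that $G$ is cyclic. I expect the only real content to lie in the counting step of the second paragraph; the partition of the elements of order $d$ among the cyclic subgroups of order $d$ and the totient identity are standard, and no commutativity of $G$ is required, so there should be no genuine obstacle beyond the bookkeeping. (This reuses Lemma \ref{ordercount} in exactly the same way as Corollary \ref{onesubgroup}, which is a reassuring sanity check on the approach.)
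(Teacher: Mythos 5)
Your proof is correct: the totient-counting step converting ``at most one cyclic subgroup of each order'' (Theorem \ref{divonecyclic}) into ``at most $n$ elements of order dividing $n$'' is valid in any group, commutative or not, and Lemma \ref{ordercount} then yields cyclicity of the finite group $G$. The paper states this corollary without proof, and your argument---routing through Lemma \ref{ordercount} just as Corollary \ref{onesubgroup} does---is exactly the standard argument the paper leaves implicit.
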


\begin{thm}\label{ncclassificationui}Let $D$ be a division semialgebra over $\mathbb{Z}_\mathrm{max}$ with finite unit index.   Then $D=F^{(n)}$ for some $n$.\end{thm}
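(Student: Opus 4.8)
The plan is to reduce the noncommutative statement to the commutative classification theorem \ref{classificationui} by showing that $D$ is in fact commutative. Since $D$ is a division semiring, $D=\{0\}\cup D^\times$, and addition is commutative by definition; hence $D$ is commutative as soon as its multiplicative group $D^\times$ is abelian. Once this is known, $D$ is a commutative semifield which is an extension of $\mathbb{Z}_\mathrm{max}$ of finite unit index, so theorem \ref{classificationui} applies verbatim and yields $D\cong F^{(n)}$ for some $n$.

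Thus the whole proof comes down to showing $D^\times$ is abelian, and here I would exploit two facts already in hand. First, the image of $\mathbb{Z}_\mathrm{max}$ lies in the center of $D$, so $\mathbb{Z}_\mathrm{max}^\times$ is a central subgroup of $D^\times$. Second, by the corollary immediately preceding this theorem, $G=D^\times/\mathbb{Z}_\mathrm{max}^\times$ is cyclic. I would then choose a lift $\hat{g}\in D^\times$ of a generator of $G$, so that every element of $D^\times$ has the form $z\hat{g}^k$ with $z\in\mathbb{Z}_\mathrm{max}^\times$ and $k\in\mathbb{Z}$. Given two such elements $z_1\hat{g}^{k_1}$ and $z_2\hat{g}^{k_2}$, the central elements $z_1,z_2$ commute with everything while the powers of $\hat{g}$ commute among themselves, so the two elements commute. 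Hence $D^\times$ is abelian.

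With $D^\times$ abelian, $D$ is commutative, and the argument of theorem \ref{classificationui} carries over directly: $D^\times$ is torsion free by the corollary on torsion-freeness and sits in a short exact sequence $0\to\mathbb{Z}_\mathrm{max}^\times\to D^\times\to G\to 0$ with $G$ finite, so lemma \ref{Zses} gives $D^\times\cong\mathbb{Z}$; picking a generator $v$ and writing $u=v^n$ with $n>0$, lemma \ref{selmonotonicdivision} (applicable since $D$ is selective) determines the addition and identifies the extension with $F^{(n)}$.

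I do not anticipate a genuine obstacle in this final theorem: essentially all of the substance has been placed in the earlier results, in particular theorem \ref{divonecyclic} together with its corollary establishing that $G$ is cyclic, and lemma \ref{selmonotonicdivision}. The only new ingredient is the elementary group-theoretic observation that a group which is a central extension of a cyclic group is abelian, and this is precisely the point at which it matters both that the unit index is finite and that $\mathbb{Z}_\mathrm{max}$ embeds into the \emph{center} of $D$.
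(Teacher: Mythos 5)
Your proposal is correct and follows essentially the same route as the paper: the paper's proof also takes $G=D^\times/\mathbb{Z}_\mathrm{max}^\times$ cyclic from the preceding corollary, notes that $D^\times$ is a central extension of a cyclic group and hence abelian, and then invokes theorem \ref{classificationui}. Your explicit verification of the central-extension fact (writing elements as $z\hat{g}^k$) simply fills in a step the paper leaves to the reader.
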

\begin{proof} Let $G=D^\times/\mathbb{Z}_\mathrm{max}^\times$.  Then $G$ is cyclic.  Since the quotient of $D^\times$ by a central subgroup is abelian, $D^\times$ is itself abelian.  Apply theorem \ref{classificationui}.\end{proof}

\section{Finite division semialgebras over $\mathbb{Z}_\mathrm{max}$}\label{fdsasect}
As before, we do not assume semirings to be commutative.

\begin{defn}Let $K$ be an idempotent semifield.  A division semialgebra $L$ over an idempotent semifield $K$ is called archimedean if for all $x\in L$, there exists $y\in K$ such that $x+y=y$.\end{defn}

\begin{thm}\label{ncarchui}Let $D$ be a finite archimedean division semialgebra over $\mathbb{Z}_\mathrm{max}$.  Then $\mathrm{ui}(D/\mathbb{Z}_\mathrm{max})<\infty$.\end{thm}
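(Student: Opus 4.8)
The plan is to run the entire argument of Section~\ref{finitearchsect} leading to Theorem~\ref{archui} in the noncommutative setting, verifying at each step that every multiplication used to carry out the construction is by an element of the central copy of $\mathbb{Z}_\mathrm{max}$, so that the noncommutativity of $D$ is never actually invoked. Throughout I would fix a finite set $S$ with $0 \notin S$ generating $D$ as a left $\mathbb{Z}_\mathrm{max}$-semimodule, and use repeatedly that each power $u^k$ is central in $D$, so that left and right scalar multiplication agree.

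First I would reprove the analogue of Lemma~\ref{archlowerbound}: for nonzero $x \in D$ there is a nonzero $z \in \mathbb{Z}_\mathrm{max}$ with $x + z = x$. Starting from an archimedean bound $x^{-1} + y = y$, I would multiply on the left by $x$ and on the right by $y^{-1}$; since $y$ is central this collapses to $y^{-1} + x = x$, so $z = y^{-1}$ works. Next comes the analogue of Lemma~\ref{Mlemma}: because $S^{-1}S = \{s_1^{-1}s_2\}$ is finite and each of its elements is bounded above and below by powers of $u$ (the archimedean property together with the previous step), there is an $M$ with $x + u^M = u^M$ and $x + u^{-M} = x$ for every $x \in S^{-1}S$, and any larger exponent works by the same monotonicity computation, which only adds central powers of $u$.

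With $M$ fixed I would define $T_n$ and $T = \bigcup_n T_n$ exactly as before. The inclusions $T_n \subseteq T_{n+1}$, the finiteness $T = T_N$ via pigeonhole, and the fact that $S \subseteq T$ still generates $D$ all transfer verbatim, since they rest only on idempotency of addition and on the commutativity of addition (which holds in every semiring). The one step needing genuine care is the analogue of Lemma~\ref{negTlemma}. To delete a term $u^{k_i}s_i$ with $k_i < -M$ from a sum $s + \sum u^{k_i}s_i$, I would apply Lemma~\ref{Mlemma} to $s_i^{-1}s \in S^{-1}S$ to get $s_i^{-1}s + u^{k_i} = s_i^{-1}s$, multiply on the left by $s_i$, and use centrality to rewrite $s_i u^{k_i} = u^{k_i}s_i$, obtaining $s + u^{k_i}s_i = s$; adding back the remaining terms then removes the offending summand. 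This is the crux where one must confirm that only a central scalar is being shifted past $s_i$, so that noncommutativity does no harm.

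Finally, dividing by the largest (central) power $u^{k_n}$ shows every nonzero element of $T\mathbb{Z}_\mathrm{max}$ is a sum $\sum u^{k_i}s_i$ and conversely, so $T\mathbb{Z}_\mathrm{max}$ is closed under addition and is a sub-semimodule containing $S$; hence $D = T\mathbb{Z}_\mathrm{max}$. As in the proof of Theorem~\ref{selectiveui}, it follows that $D^\times = T \cdot \mathbb{Z}_\mathrm{max}^\times$ and that $T$ surjects onto $D^\times/\mathbb{Z}_\mathrm{max}^\times$, giving $\mathrm{ui}(D/\mathbb{Z}_\mathrm{max}) \le |T| < \infty$. I do not expect a real obstacle here: the whole difficulty of the commutative argument is the combinatorial $T_n$ construction, and since $\mathbb{Z}_\mathrm{max}$ is central every division or shift in that construction is by a central element, so the noncommutative case amounts to rechecking the same bookkeeping.
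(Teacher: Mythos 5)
Your proposal is correct and is precisely the paper's own argument: the paper proves Theorem~\ref{ncarchui} by observing that the proof of Theorem~\ref{archui} never uses commutativity (only the centrality of $\mathbb{Z}_\mathrm{max}$ in $D$), leaving the verification to the reader, and your write-up simply carries out that verification explicitly, correctly identifying Lemma~\ref{negTlemma} and Lemma~\ref{archlowerbound} as the steps where centrality of the scalars is genuinely needed.
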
\begin{proof}The reader may verify that the commutative law was never used\footnote{However the fact that $\mathbb{Z}_\mathrm{max}$ lies in the center of $D$ was used frequently.} in the proof of theorem \ref{archui}, or any of the results leading up to it.\end{proof}

\begin{defn}Let $D$ be an idempotent division semiring.  A division subsemiring $E\subseteq D$ is called convex if for any $x\in D$ such that there exist $y,z\in E$ with $x+y=y$ and $x+z=x$, one has $x\in E$.  $E\subseteq D$ is called normal if $E^\times\subseteq D^\times$ is normal.\end{defn}

\begin{thm}Let $D$ be an idempotent division semiring and $E\subseteq D$ a convex normal division subsemiring.  Then $D/E^\times$ is an idempotent division semiring.\end{thm}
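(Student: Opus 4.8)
The plan is to follow the template of the proof of Theorem \ref{convexquotient}, with normality of $E^\times$ playing the role that commutativity played there. First I would dispose of the formal structural points. Since $E^\times$ is normal in $D^\times$, the cosets $D^\times/E^\times$ form a group, so setting $D/E^\times=\{0\}\cup D^\times/E^\times$ with the induced multiplication makes the nonzero classes into a group; normality is precisely what makes coset multiplication well defined and what lets me identify the relation $x\sim x'$ (meaning $x'=ex$ for some $e\in E^\times$) with its right-handed form $x'=xe'$, since $xE^\times=E^\times x$. Idempotency, associativity, distributivity, the absorbing behaviour of the class $\bar 0$, and the descent of inversion are all then automatic. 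Hence the entire content of the theorem is the well-definedness of addition on classes.

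Because addition in $D$ is commutative, it suffices to prove one-sided invariance: if $b\sim b'$ then $a+b\sim a+b'$ for every $a$. Writing $b'=fb$ with $f\in E^\times$ (possible since $b'\in E^\times b=bE^\times$), the goal is $a+fb\sim a+b$. I would first treat the case $f+1=f$ and then reduce the general case to it exactly as in Theorem \ref{convexquotient}. Since $(f+1)+1=f+1$, the special case gives $a+b\sim a+(f+1)b$, so it remains to show $a+fb\sim a+(f+1)b$; applying the special case to the pair $(f^{-1}a,b)$ with the unit $1+f^{-1}$ (which satisfies $(1+f^{-1})+1=1+f^{-1}$) yields $f^{-1}a+(1+f^{-1})b\sim f^{-1}a+b$, and left-multiplying this equivalence by $f$ gives exactly $a+(f+1)b\sim a+fb$. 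Two small remarks make this rigorous: left multiplication by a unit $w$ preserves $\sim$, since $w(eq)(wq)^{-1}=w e w^{-1}\in E^\times$ by normality; and $1+f^{-1}$ is a nonzero element of the subsemiring $E$ (a sum of nonzero elements is nonzero in an idempotent division semiring, as $x+y=0$ forces $x=x+x+y=x+y=0$), hence lies in $E^\times$.

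The crux is the special case $f+1=f$, and the main obstacle is that, unlike in the commutative setting, the natural lower and upper estimates want to land on the two different quotients $(a+b)^{-1}(a+fb)$ and $(a+fb)(a+b)^{-1}$, whereas convexity can only be applied once both bounds are attached to a single element. The resolution is to use right division throughout. Setting $z=(a+fb)(a+b)^{-1}$, right-multiplying $f+1=f$ by $b$ gives $fb+b=fb$, whence $(a+fb)+(a+b)=a+fb$; dividing on the right by $(a+b)$ yields $z+1=z$. Right-multiplying $f+1=f$ by $a$ gives $fa+a=fa$, whence $f(a+b)+(a+fb)=f(a+b)$; dividing on the right by $(a+b)$ yields $f+z=f$. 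Thus $z+1=z$ and $z+f=f$ with $1,f\in E$, so convexity forces $z\in E$, and being a unit $z\in E^\times$; therefore $a+fb=z(a+b)\sim a+b$.

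I would emphasize in the writeup that none of these computations uses commutativity of multiplication, only commutativity of addition, idempotency, and the chosen relations, so the argument is legitimate noncommutatively; and that normality of $E^\times$ is used in three distinct places, namely to make coset multiplication well defined, to identify left and right cosets (so that $b'=fb$ is available), and to guarantee that scaling by a unit preserves $\sim$. With addition well defined, the remaining semiring axioms and the division property descend formally, completing the proof that $D/E^\times$ is an idempotent division semiring.
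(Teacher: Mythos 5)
Your proof is correct and takes essentially the same route as the paper: the paper's own proof consists of the remark that the addition argument of Theorem \ref{convexquotient} goes through without commutativity and that multiplication descends because it is well defined on the group $D^\times/E^\times$, and you carry out exactly that adaptation, using right division so that both convexity bounds attach to the single element $(a+fb)(a+b)^{-1}$ and invoking normality precisely where the commutative argument implicitly used commutativity. In effect you have supplied the side-management details that the paper leaves to the reader.
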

\begin{proof}The fact that addition is well defined does not require the multiplicative structure, so can be proven the same way as the commutative case was in theorem \ref{convexquotient}.  Multiplication is well defined because it is well defined in $D^\times/E^\times$.\end{proof}

Substituting the above theorem and theorem \ref{ncBalgclosed} into the proof of theorem \ref{convexthm} gives the following.

\begin{thm}Let $D,E$ be finite division semirings with $E\subseteq D$ normal and convex. Suppose $D$ is finite as a left $E$-semimodule.  Then $D=E$.\end{thm}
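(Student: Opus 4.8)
The plan is to transcribe the proof of theorem \ref{convexthm} into the noncommutative setting, substituting the preceding quotient theorem for theorem \ref{convexquotient} and theorem \ref{ncBalgclosed} for theorem \ref{Balgclosed}. The first step is to form the quotient $D/E^\times$. Because $E\subseteq D$ is both convex and normal, the preceding theorem guarantees that $D/E^\times$ is again an idempotent division semiring: its multiplication descends from $D^\times$ since $E^\times$ is normal, and its addition is well defined precisely because $E$ is convex. So the reduction step of the commutative argument carries over without change.

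Next I would verify that $D/E^\times$ is a finite division semialgebra over $\mathbb{B}$. Since $E$ is a division semiring, every nonzero element of $E$ is a unit, so $E/E^\times=\{0,1\}=\mathbb{B}$, and the induced map $\mathbb{B}=E/E^\times\to D/E^\times$ is an injective homomorphism whose image $\{\bar 0,\bar 1\}$ is automatically central. For finiteness, take a finite set $S$ generating $D$ as a left $E$-semimodule, so that every $x\in D$ can be written $x=\sum a_i s_i$ with $a_i\in E$ and $s_i\in S$. Passing to the quotient gives $\bar x=\sum \bar{a_i}\,\bar{s_i}$ with $\bar{a_i}\in E/E^\times=\mathbb{B}$ and $\bar{s_i}$ ranging over the finite set $\bar S$. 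Hence the images of $S$ generate $D/E^\times$ as a $\mathbb{B}$-semimodule, and $D/E^\times$ is a finite division semialgebra over $\mathbb{B}$.

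Applying theorem \ref{ncBalgclosed} to $D/E^\times$ then yields $D/E^\times=\mathbb{B}$. Since $D/E^\times=\{0\}\cup(D^\times/E^\times)$, this forces $D^\times/E^\times$ to be trivial, so $D^\times=E^\times$ and therefore $D=\{0\}\cup D^\times=\{0\}\cup E^\times=E$, as desired.

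The one point I expect to require genuine care—rather than purely formal transcription—is confirming that the quotient machinery behaves correctly in the noncommutative case: namely that addition descends to $D/E^\times$ (which is exactly where convexity enters, and is handed to us by the preceding theorem) and that the scalar action of $\mathbb{B}$ on the generators $\bar S$ is the naive one, so that finite generation is genuinely preserved under the quotient. Beyond this, the argument uses neither commutativity of $D$ nor centrality of $E$, appealing only to the harmless fact that $\mathbb{B}=\{0,1\}$ is central in any semiring.
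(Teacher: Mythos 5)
Your proposal is correct and is precisely the paper's own argument: the paper proves this result by the one-line instruction ``substitute the noncommutative quotient theorem and theorem \ref{ncBalgclosed} into the proof of theorem \ref{convexthm},'' and your write-up is a faithful, correctly detailed execution of exactly that substitution, including the two points (well-definedness of addition on $D/E^\times$ via convexity and normality, and preservation of finite generation over $E/E^\times=\mathbb{B}$) where care is actually needed.
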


Since section \ref{maxarchsect} never used the commutative law, we have the following.
\begin{thm}Let $D$ be an idempotent division semiring.  There is a maximal archimedean division subsemiring $D_\mathrm{arch}\subseteq D$.  Furthermore $D_\mathrm{arch}\subseteq D$ is convex and normal.\end{thm}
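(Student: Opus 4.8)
The plan is to transcribe, essentially word for word, the construction and results of Section~\ref{maxarchsect}, and then to supply the one genuinely new ingredient, normality. Fix the central idempotent semifield $K$ over which $D$ is a division semialgebra (so $K$ lies in the center of $D$ by definition), and define $D_\mathrm{arch}\subseteq D$ exactly as in Definition~\ref{maxarchdef}, namely $D_\mathrm{arch}=\{x\in D\mid x+y=y,\ x+z=x\text{ for some }y,z\in K\}$. First I would check that $D_\mathrm{arch}$ is a division subsemiring of $D$ containing $K$: closure under addition is the same computation as in Section~\ref{maxarchsect}, and closure under multiplication follows from the identity $x_1x_2+y_1y_2=(x_1+y_1)(x_2+y_2)=y_1y_2$, whose expansion uses only left and right distributivity, not commutativity of $D$. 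Closure under inverses follows as before by dividing the defining relations through by central elements of $K$; the noncommutative form of Lemma~\ref{archlowerbound} (valid since its proof never uses commutativity, only centrality of $K$) supplies the requisite nonzero lower bound.

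Next I would re-run the two remaining results of Section~\ref{maxarchsect}: that $D_\mathrm{arch}$ is the maximal archimedean division subsemiring, and that it is convex in $D$. The maximality argument is unchanged except that, when passing from $x^{-1}+z^{-1}=z^{-1}$ to $x+z=x$, one multiplies the relation through by $x$ and by $z$ and cancels using that $z,z^{-1}\in K$ are central; the convexity argument uses only associativity and commutativity of addition (which hold in any semiring) together with the definition of $D_\mathrm{arch}$, so it carries over verbatim. At no point does any of this invoke commutativity of general elements of $D$.

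The genuinely new statement is normality, i.e.\ that $D_\mathrm{arch}^\times$ is a normal subgroup of $D^\times$. This is where centrality of $K$ does the decisive work. Let $g\in D^\times$ and $x\in D_\mathrm{arch}$ with witnesses $y,z\in K$ satisfying $x+y=y$ and $x+z=x$. Conjugating the first relation gives $gxg^{-1}+gyg^{-1}=gyg^{-1}$, and since $y\in K$ is central we have $gyg^{-1}=y$, so $gxg^{-1}+y=y$; the same manipulation of the second relation gives $gxg^{-1}+z=gxg^{-1}$. Hence $gxg^{-1}\in D_\mathrm{arch}$, so $D_\mathrm{arch}$ is stable under conjugation by units of $D$, which is exactly the statement that $D_\mathrm{arch}^\times$ is normal in $D^\times$.

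I expect no serious obstacle: the entire difficulty is bookkeeping. The one point that must be checked with care is that every place where multiplication enters the arguments of Sections~\ref{finitearchsect} and~\ref{maxarchsect}---the multiplicative-closure identity, the inversion of the defining relations, and the lower-bound lemma---uses only two-sided distributivity and the centrality of the base $K$, never the commutativity of $D$. Once this is confirmed, existence, maximality, and convexity are immediate transcriptions, and normality is the short conjugation argument above.
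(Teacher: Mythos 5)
Your proposal is correct and follows essentially the same route as the paper: the paper likewise observes that the arguments of Section \ref{maxarchsect} (existence, maximality, convexity of $D_\mathrm{arch}$) never use commutativity of the ambient semiring, and then proves normality by exactly your conjugation argument, namely conjugating the witnessing relations $x+y=y$ and $x+z=x$ and using that $y,z$ lie in the central subsemifield $K$ so that $gyg^{-1}=y$ and $gzg^{-1}=z$. Your extra care in checking that the multiplicative-closure identity, the inversion step, and Lemma \ref{archlowerbound} use only distributivity and centrality of $K$ is precisely the verification the paper delegates to the reader.
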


\begin{proof}We only need to show normality.  Let $x\in D_\mathrm{arch}^\times$ and $g\in D^\times$.  Then by the construction of $D_\mathrm{arch}$\footnote{The construction is essentially definition \ref{maxarchdef} with $L$ replaced by $D$.}, we have $y,z\in K$ such that $x+y=y$ and $x+z=x$.  Then we get $gxg^{-1}+gyg^{-1}=gyg^{-1}$, and a similar formula involving $z$.  But $y$ and $z$ lie in $K$ which is contained in the center of $D$, so we have $gxg^{-1}+y=y$ and $gxg^{-1}+z=gxg^{-1}$.  Thus by the construction of $D_\mathrm{arch}$ we have $gxg^{-1}\in D_\mathrm{arch}$.
\end{proof}

As in section \ref{maxarchsect}, we may combine the above results to obtain the following.

\begin{cor}Every finite division semialgebra over an idempotent semifield is archimedean.\end{cor}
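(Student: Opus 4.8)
The plan is to transcribe the proof of the commutative Corollary~\ref{finitearchimedean}, replacing each ingredient by its noncommutative counterpart assembled in this section. So let $D$ be a finite division semialgebra over an idempotent semifield $K$; thus $K$ sits in the center of $D$ and $D$ is finite as a left $K$-semimodule.

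First I would form the maximal archimedean division subsemiring $D_\mathrm{arch}$ of $D$ (taken relative to $K$, as in Definition~\ref{maxarchdef}). The theorem constructing it gives $K \subseteq D_\mathrm{arch} \subseteq D$ and, crucially, tells us that $D_\mathrm{arch}$ is both convex and normal in $D$. Next I would observe that $D$ is finite as a left $D_\mathrm{arch}$-semimodule: since $K \subseteq D_\mathrm{arch}$, any finite set generating $D$ over $K$ also generates it over $D_\mathrm{arch}$. With $D_\mathrm{arch} \subseteq D$ convex, normal, and $D$ finite over it, the noncommutative analogue of Theorem~\ref{convexthm} proved above applies and forces $D = D_\mathrm{arch}$. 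Since $D_\mathrm{arch}$ is archimedean over $K$ by construction, this gives that $D$ is archimedean, as desired.

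The main obstacle is not in this final assembly, which is pure bookkeeping, but in the infrastructure already in place, so there is little left to do here. The genuine difficulties were (i) upgrading convexity to \emph{normality}, so that the quotient $D/D_\mathrm{arch}^\times$ is again a division semiring and the argument of Theorem~\ref{convexthm} can be run; this is where centrality of $K$ is used; and (ii) establishing the noncommutative analogue Corollary~\ref{ncBalgclosed} of Theorem~\ref{Balgclosed}, which is fed into that quotient and which ultimately rests on Lemma~\ref{noncommmonotonicdivision} forcing $D^\times$ to be torsion free. Consequently the only point needing care in the present proof is to check that all three hypotheses of the $D = D_\mathrm{arch}$ theorem --- convexity, normality, and finiteness as a left semimodule --- hold at once, which the preceding two paragraphs confirm.
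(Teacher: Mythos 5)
Your proof is correct and is exactly the argument the paper intends: the paper states this corollary with only the remark that one combines the preceding results ``as in section \ref{maxarchsect}'', i.e.\ form $D_\mathrm{arch}$, use its convexity and normality together with finiteness of $D$ as a left $D_\mathrm{arch}$-semimodule, and invoke the noncommutative analogue of Theorem \ref{convexthm} to conclude $D=D_\mathrm{arch}$. Your write-up simply makes those bookkeeping steps explicit, matching the proof of Corollary \ref{finitearchimedean} in the commutative case.
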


\begin{thm}Let $D$ be a finite division semialgebra over $\mathbb{Z}_\mathrm{max}$.  Then $D=F^{(n)}$ for some $n$.\end{thm}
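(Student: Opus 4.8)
The plan is to mirror the two-step strategy that established the commutative classification at the end of section \ref{classificationsect}: reduce to the case of finite unit index, and then invoke the structure theorem available in that case. Concretely, I would show that every finite division semialgebra over $\mathbb{Z}_\mathrm{max}$ automatically has finite unit index, and then quote theorem \ref{ncclassificationui} to conclude $D = F^{(n)}$.

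The reduction to finite unit index requires two inputs, both supplied immediately above. First, by the preceding corollary (every finite division semialgebra over an idempotent semifield is archimedean), the semialgebra $D$ is archimedean over $\mathbb{Z}_\mathrm{max}$. Second, theorem \ref{ncarchui} asserts precisely that a finite archimedean division semialgebra over $\mathbb{Z}_\mathrm{max}$ has finite unit index. Chaining these gives $\mathrm{ui}(D/\mathbb{Z}_\mathrm{max}) < \infty$, and theorem \ref{ncclassificationui} then delivers $D = F^{(n)}$. At the level of the final statement this is a purely formal combination, exactly parallel to combining corollary \ref{finiteuiandrank} with theorem \ref{classificationui} in the commutative case.

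Since the statement itself is a one-line assembly, I expect the real obstacles to sit in the results being quoted rather than in this proof. The two places where noncommutativity genuinely intervenes are worth flagging. Establishing that $D$ is archimedean rests on constructing $D_\mathrm{arch}$ and checking that it is not only convex but also normal; normality is the new ingredient relative to section \ref{maxarchsect}, and it is exactly what is needed for the quotient $D/D_\mathrm{arch}^\times$ to be a division semiring. It follows from the centrality of $\mathbb{Z}_\mathrm{max}$ in $D$. The other genuinely noncommutative point is theorem \ref{ncclassificationui}, whose proof must show that $G = D^\times/\mathbb{Z}_\mathrm{max}^\times$ is cyclic without invoking the commutative monotonic division lemma directly; the resolution is to first prove $D$ selective and then apply lemma \ref{selmonotonicdivision}, which holds with no commuting hypothesis. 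With those two facts in hand, the final theorem is immediate.
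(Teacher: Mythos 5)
Your proof is correct and is essentially identical to the paper's own argument: the paper likewise chains the corollary that finite division semialgebras are archimedean with theorem \ref{ncarchui} to get finite unit index, then concludes via theorem \ref{ncclassificationui}. Your additional remarks on where noncommutativity enters (normality of $D_\mathrm{arch}$ and the selectivity argument behind theorem \ref{ncclassificationui}) accurately reflect how those quoted results are established in the paper.
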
\begin{proof}
Since $D$ is finite over $\mathbb{Z}_\mathrm{max}$, it is archimedean over $\mathbb{Z}_\mathrm{max}$.  Since it is finite and archimedean, it has finite unit index.  We may now apply theorem \ref{ncclassificationui}.
\end{proof}

\section*{Acknowledgements}
This project was suggested by Caterina Consani, and was funded by the Johns Hopkins University.

\end{document}